\newcommand{\R}{\mathbb R}
\newcommand{\Z}{\mathbb Z} 
\newcommand{\F}{\mathcal F}
\newcommand{\T}{\mathbf{T}}
\newcommand{\bL}{\mathbb L}
\newcommand{\al}{\alpha}
\newcommand{\be}{\beta}
\newcommand{\ep}{\varepsilon}
\newcommand{\si}{\sigma}
\newcommand{\Ga}{\Gamma}
\newcommand{\La}{\Lambda}
\newcommand{\om}{\omega}
\newcommand{\CP}{\mathbb{CP}^2}
\newcommand{\CPl}{\mathbb{CP}^1}
\newcommand{\CPb}{\overline{\mathbb{CP}}\vphantom{\CP}^2}
\newcommand{\del}{\partial}
\newcommand{\lra}{\longrightarrow}
\DeclareMathOperator{\coker}{coker}
\DeclareMathOperator{\trace}{tr}
\DeclareMathOperator{\lk}{lk}
\DeclareMathOperator{\tb}{tb}
\DeclareMathOperator{\rot}{rot}
\newtheorem{thm}{Theorem}[section]
\newtheorem{lemma}[thm]{Lemma}
\newtheorem{prop}[thm]{Proposition}
\newtheorem{conj}{Conjecture}
\begin{document}

\author{Marco Golla and Paolo Lisca}
\title[On Stein fillings of contact torus bundles]
{On Stein fillings of contact torus bundles}
\address{Dipartimento di Matematica, Universit\`a di Pisa, Largo Bruno Pontecorvo 5, 56121 Pisa, Italy}
\email{marco.golla@for.unipi.it, lisca@dm.unipi.it}
\keywords{Stein fillings, torus bundles}
\subjclass[2010]{57R17, 57M50}

\begin{abstract}
We consider a large family $\F$ of torus bundles over the circle, and we use recent 
work of Li--Mak to construct, on each $Y\in\F$, a Stein fillable contact structure $\xi_Y$. 
We prove that (i) each Stein filling  of $(Y,\xi_Y)$ has vanishing first Chern class and first Betti number, (ii) if $Y\in\F$ is elliptic then all Stein fillings of $(Y,\xi_Y)$ are pairwise diffeomorphic and (iii) if $Y\in\F$ is parabolic or hyperbolic then all Stein fillings of $(Y,\xi_Y)$ share the same Betti numbers and fall into finitely many diffeomorphism classes. Moreover, for infinitely 
many hyperbolic torus bundles $Y\in\F$ we exhibit non-homotopy equivalent Stein fillings
of $(Y,\xi_Y)$.
\end{abstract}

\maketitle

\section{Introduction}\label{s:intro}
The diffeomorphism classification of symplectic fillings has been previously considered by several authors. 
For the standard definitions on symplectic structures, contact structures and their symplectic fillings we refer the reader to~\cites{Ge08, McDS98}. 
The first classification result for symplectic fillings is due to Eliashberg~\cite{El91}, 
who proved that a symplectic filling of the standard contact $S^3$ is diffeomorphic to a blowup
of $B^4$. McDuff~\cite{McD90} extended Eliashberg's result to the lens
spaces $L(p,1)$ endowed with their standard contact structures. Ohta and Ono~\cites{OO03, OO05}
determined the diffeomorphism types of symplectic fillings of links of simple 
elliptic and simple singularities endowed with their natural contact structures. Stein fillings up to diffeomorphisms were classified by the second author~\cite{Li08} for all lens spaces 
with their standard contact structures, by Plamenevskaya--Van Horn-Morris~\cite{PVH10} on $L(p,1)$ with other contact structures  
 and by Starkston~\cite{St15} for certain contact Seifert fibered 3--manifolds. 
In this paper we study Stein and symplectic fillings of infinitely many contact torus bundles over the circle. 

Below we define a large family $\F$ of closed, oriented torus bundles over $S^1$, and in Section~\ref{s:caps} 
we use recent work of Li--Mak~\cite{LM14} to construct a Stein fillable contact structure $\xi_Y$ for each $Y\in\F$. 
The following is our main result.
\begin{thm}\label{t:main}
Let $Y\in\F$. Then, each Stein filling  of $(Y,\xi_Y)$ has vanishing first Chern class and first Betti number. If $Y$ is elliptic then $(Y,\xi_Y)$ admits a unique Stein filling up to diffeomorphisms. 
If $Y$ is parabolic or hyperbolic then all the Stein fillings of $(Y,\xi_Y)$ share the same Betti numbers and fall into finitely 
many diffeomorphism classes. 
\end{thm}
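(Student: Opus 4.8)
The plan is to cap off an arbitrary Stein filling with the Li--Mak cap, recognise the resulting closed symplectic $4$--manifold, and read off the stated constraints. Let $(W,J)$ be a Stein filling of $(Y,\xi_Y)$, equip $W$ with the symplectic form determined by $J$, and glue on the symplectic cap $(C,\omega_C)$ built in Section~\ref{s:caps} to obtain a closed symplectic $4$--manifold $(X,\omega)\supset W$. By construction $C$ is a regular neighbourhood of a symplectic configuration $\Gamma$ --- a cycle of symplectic spheres, or, in the relevant elliptic cases, a single symplectic torus --- with anticanonical homology class $[\Gamma]=-K_X$. Two facts are then immediate: $b_1(C)$ equals the first Betti number of $\Gamma$ (one for a cycle, two for a torus), and $c_1(X,\omega)$ restricts to $0$ on $W$, since a class Poincar\'e dual to a cycle supported in $C$ lies in the kernel of $H^2(X;\Z)\to H^2(W;\Z)$. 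As $\omega|_W$ is deformation equivalent to the symplectic form induced by $J$, this already yields $c_1(W,J)=0$.

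Next I claim $(X,\omega)$ is rational. It carries the effective anticanonical class $[\Gamma]$, represented by a symplectic cycle of spheres or by a symplectic torus of non-negative square; plugging this into McDuff's structure theorem --- or ruling out the higher-genus ruled alternative by positivity of intersections of $J$--holomorphic curves, since a section of sufficiently negative self-intersection would pair negatively with the effective anticanonical divisor without being one of its components --- forces $X$ to be rational. Hence $b_1(X)=0$ and $b_2(X)=10-K_X^2$, with $X$ in the finite list $\{\CP\#k\,\CPb\}_{k\ge 0}\cup\{S^2\x S^2\}$. From the Mayer--Vietoris sequence of $X=W\cup_Y C$, in which $H_1(X)=0$ forces $H_1(Y;\Z)$ to surject onto $H_1(W;\Z)\oplus H_1(C;\Z)$, together with the known homology of $\Gamma$ (and a short additional argument when $Y$ is parabolic), one obtains $b_1(W)=0$. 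This proves the first assertion of the theorem.

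For the Betti numbers I combine $\chi(Y)=0$ and Novikov additivity, $\chi(X)=\chi(W)+\chi(C)$ and $\si(X)=\si(W)+\si(C)$, with $K_X^2=2\chi(X)+3\si(X)$. The self-intersections of the components of $\Gamma$ are the combinatorial data attached to the monodromy of $Y$, so $\chi(C)$, $\si(C)$ and $[\Gamma]^2=K_X^2$ depend only on $Y$; hence so does $b_2(X)=10-K_X^2$, which pins $X$ down to a bounded number of diffeomorphism types, and in turn $\chi(W)$ and $\si(W)$ are determined by $Y$. Since a Stein domain of complex dimension $2$ admits a handle decomposition with handles of index $\le 2$, $H_3(W)=H_4(W)=0$; combined with $b_1(W)=0$ this shows that all Betti numbers of $W$ are determined by $Y$, establishing the claim that the Stein fillings of $(Y,\xi_Y)$ share the same Betti numbers.

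It remains to control $W$ up to diffeomorphism. By the above, $W$ is diffeomorphic to the complement of a regular neighbourhood of a symplectic anticanonical configuration $\Gamma$ of prescribed combinatorial type inside a rational surface $X$ drawn from a finite list, so the diffeomorphism type of $W$ depends only on the isotopy class of $\Gamma$ in $X$; the main obstacle is to show there are finitely many such isotopy classes. One first checks that the tuples of homology classes of the components of $\Gamma$ fall into finitely many orbits of $\Diff(X)$ --- using the adjunction and anticanonical constraints together with Wall's realisation of automorphisms of the intersection form by diffeomorphisms --- and then that each homology type carries finitely many configurations up to symplectomorphism, by the symplectic isotopy results for low-genus curve configurations in rational surfaces in the tradition of Gromov, McDuff and Li--Mak. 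This yields finitely many diffeomorphism types for $W$ when $Y$ is parabolic or hyperbolic. When $Y$ is elliptic, $X$ is a del Pezzo surface and $\Gamma$ is a smooth symplectic anticanonical torus, which is unique up to symplectic isotopy; hence $W$ is unique up to diffeomorphism. The two delicate points, on which everything rests, are the identification of $X$ as rational and the finiteness --- together with its del Pezzo sharpening to uniqueness --- of symplectic anticanonical configurations in $X$.
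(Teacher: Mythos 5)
Your overall strategy---cap off an arbitrary Stein filling with the Li--Mak cap, recognise the resulting closed manifold as rational, and count anticanonical configurations---is indeed the paper's strategy, but the step on which everything rests is asserted where it needs to be proved. You write that ``by construction'' the divisor $\Gamma$ satisfies $[\Gamma]=-K_X$ in the glued manifold $X=W\cup_Y C$. This is true in the original blowup of $\CP$ in which the cap was built, but $X$ is a different closed manifold, obtained by replacing the complement of the cap by an arbitrary Stein filling $W$; a priori nothing identifies $K_X$, and establishing $c_1(X)=PD[\Gamma]$ is precisely the content of the hard part of the proof. The paper gets there by locating a symplectic sphere of square $+1$ (or $0$) in the cap, invoking McDuff's classification to see that $X$ is a blowup of $\CP$ (or of an $S^2$--bundle over a base that must then be shown to be a sphere), and then running a blowdown procedure---using minimality of the filling and positivity of intersections---that reduces $\Gamma$ to one of the basic configurations $(3\ell)$ or $(\ell\mathcal{C}_2)$, whose class is visibly anticanonical; Gromov's uniqueness of these basic configurations up to isotopy then yields the finiteness (or uniqueness) of the admissible blowup sequences, which is also the only place from which your ``finitely many configurations up to symplectomorphism in a given homology class'' could come. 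Without this chain, your deductions $c_1(W)=0$, rationality of $X$, and $b_2(X)=10-K_X^2$ all hang in the air.

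A second, related problem is the elliptic case $Y=\T_{-A(\ep)}$. There the cap contains no symplectic sphere of nonnegative square (the divisor is a $-1$--sphere and an $(\ep-2)$--sphere meeting twice), so McDuff's theorem does not apply directly; and the cap is not a neighbourhood of a smooth torus---if it were, $b_1(C)=2$ would contradict your own Mayer--Vietoris argument, since $b_1(Y)=1$. The paper instead first proves $c_1(W)=0$ by an independent argument (the conjugate Stein structure fills the conjugate contact structure, which by Honda's classification is isotopic to $\xi_Y$, so Lisca--Mati\'c forces $c_1=0$), then blows down the $-1$--sphere to get an immersed nodal sphere, smooths its node to obtain a torus of positive square, uses the adjunction inequality to force $b_2^+(X)=1$, applies Li's Theorem~B together with $K_X\cdot[\om]<0$ to get rational-or-ruled, rules out ruled by a signature computation, and finally uses connectedness of the space of nodal pseudo-holomorphic cubics---not uniqueness of smooth anticanonical tori up to symplectic isotopy---to pin down the filling. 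None of these steps is recoverable from what you wrote, so the argument as it stands has a genuine gap in both the general identification of $X$ and in the elliptic subcase.
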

As shown in Theorems~\ref{t:ell1-hyp} and~\ref{t:par}, for some elliptic bundles and for parabolic and hyperbolic bundles in $\F$ the results of Theorem~\ref{t:main} hold more generally for minimal, strongly convex symplectic fillings rather than just for Stein fillings. 

We are now going to describe the family $\F$. 
We will denote by $\T_A$ an oriented torus bundle over $S^1$ with monodromy specified by a matrix $A\in SL_2(\Z)$. It is a well-known
fact (cf.~\cite{Ne81}*{Lemma~6.2}) that $\T_A$ is orientation-preserving diffeomorphic to $\T_B$ if and only if $A$ is conjugate 
in $SL_2(\Z)$ to $B$. Moreover, $-\T_A$ is orientation-preserving diffeomorphic to $\T_{A^{-1}}$. 
A torus bundle $\T_A$ is called \textbf{elliptic} if $|\trace A|<2$, \textbf{parabolic} if $|\trace(A)|=2$ 
and \textbf{hyperbolic} if $|\trace(A)|>2$. Given $(d_1,\dots,d_m)\in\Z^m$, $m\geq 1$, we define 
\begin{equation}\label{e:A(d)}
A(d_1,\ldots, d_m) := 
\begin{pmatrix}
d_m & 1\\
-1 & 0
\end{pmatrix}
\cdots
\begin{pmatrix}
d_1 & 1\\
-1 & 0
\end{pmatrix}\in SL_2(\Z).
\end{equation}
By~\cite{Ne81}*{Proposition~6.3}, if two $m$--tuples $d, d'\in\Z^m$ as above are obtained from each other by a cyclic permutation 
then $\T_{A(d)} = \T_{A(d')}$ and, by~\cite{Ne81}*{Theorem~6.1} $\T_A$ is hyperbolic with $\trace(A)<-2$ 
(respectively $\trace(A)>2$) if and only if $\T_A = \T_{-A(d)}$ (respectively $\T_A = \T_{A(d)}$) for some $d=(d_1,\dots,d_m)$ with 
$d_i\geq 2$ for all $i$ and $d_i\geq 3$ for some $i$. Moreover, by the proof of~\cite{Ne81}*{Theorem~6.1} 
and~\cite{Ne81}*{Theorem~7.3}, if 
 \[
d = (n_1+3,\underbrace{2,\dots,2}_{m_1},n_2+3,\underbrace{2,\dots,2}_{m_2},\dots,n_\ell + 3, \underbrace{2,\dots,2}_{m_\ell}), \quad 
m_i, n_i\geq 0,
\]
then $-\T_{\pm A(d)} = \T_{\pm A(\rho(d))}$, where 
\begin{equation}\label{e:rho}
\rho(d) := (m_1+3,\underbrace{2,\dots,2}_{n_1},m_2+3,\underbrace{2,\dots,2}_{n_2},\dots,m_\ell + 3, \underbrace{2,\dots,2}_{n_\ell}).
\end{equation}

The following definition is inspired by a similar definition from~\cite{Li13}.  
A \textbf{blowup} of a sequence $(s_1,\dots,s_\ell)$ of nonnegative integers is one of the following sequences:
\[
(s_1,\dots,s_{i-1},s_{i}+1,1,s_{i+1}+1,s_{i+2},\dots,s_\ell), \quad i=1,\ldots, \ell-1.
\]
If a sequence $s'$ is obtained from the sequence $s$ through a finite number of blowups, we also say that $s'$ is a 
blowup of $s$. Given two sequences $s, c$ of length $\ell$, we write $s\prec c$ if $s_i\le c_i$ for every $1\le i\le \ell$, 
and we say that $d\in\Z^m$ is \textbf{embeddable} if $s\prec\rho(d)$ for some blowup $s$ of $(0,0)$. 

We define $\F$ to be the set of torus bundles $Y$ over the circle such that one of following holds: 
\begin{enumerate} 
\item 
$Y$ is elliptic; 
\item
$Y$ is parabolic and $Y = -\T_{A(0,-n)}$ with $n\leq 4$; 
\item
$Y$ is hyperbolic and $Y=-\T_{A(-c)}$ with $c\geq 3$;
\item 
$Y$ is hyperbolic and $Y= \T_{-A(d)}$ with $d$ embeddable. 
\end{enumerate} 

Combining Proposition~\ref{p:ell-class} and Theorem~\ref{t:hyp-class} we obtain the following.

\begin{thm}\label{t:class}
Let $Y$ be a torus bundle of type $\T_{-A(\ep)}$ with $\ep\in\{-1,0,1\}$, $\T_{A(1)}$ or 
$\T_{-A(d)}$ with $d$ embeddable. 
Then, the contact structure $\xi_Y$ is the unique universally tight contact structure on 
$Y$ with vanishing Giroux torsion. 
\end{thm}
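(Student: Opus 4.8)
This statement assembles two results proved below: Proposition~\ref{p:ell-class}, which handles the elliptic bundles $\T_{-A(\ep)}$ with $\ep\in\{-1,0,1\}$ and $\T_{A(1)}$, and Theorem~\ref{t:hyp-class}, which handles the hyperbolic bundles $\T_{-A(d)}$ with $d$ embeddable; I describe the common strategy. The engine is the classification of tight contact structures on torus bundles due to Honda and Giroux, resting on Kanda's analysis of $T^3$ and Giroux's normal forms on $T^2\x\R$. Specialized to the matrices above, it yields an explicit finite list of isotopy classes of universally tight contact structures with vanishing Giroux torsion on $Y$; what must be shown is that this list consists of a single class and that its element is $\xi_Y$.

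First I would verify that $\xi_Y$ lies on the list. Vanishing of the Giroux torsion is immediate: $\xi_Y$ is Stein fillable by the construction of Section~\ref{s:caps}, whereas positive Giroux torsion obstructs strong symplectic fillability (Gay), hence Stein fillability. Universal tightness is established from the same construction: for the hyperbolic bundles $\xi_Y$ agrees with the canonical, Milnor fillable contact structure on a cusp singularity link, which is universally tight by Caubel--N\'emethi--Popescu-Pampu, while for the elliptic bundles the Li--Mak cap presents $\xi_Y$ compatibly with the Seifert structure of $Y$, whence universal tightness again follows.

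What remains, and is the substance of Proposition~\ref{p:ell-class} and Theorem~\ref{t:hyp-class}, is the count: Honda's classification produces a priori two universally tight, torsion-free contact structures on a torus bundle---a contact structure and its conjugate---and one must show that on each oriented $Y$ above these two agree up to isotopy. For the elliptic bundles this is a finite check: Honda's list for finite-order monodromy is short, and is cut down to a single universally tight, torsion-free class, e.g.\ by comparing the $\spc$ structures and the $d_3$ invariant of $\xi_Y$. For the hyperbolic bundles $\T_{-A(d)}$ the argument is more delicate, and this is where the embeddability hypothesis $s\prec\rho(d)$ is used: one feeds the continued-fraction data of $d$ into Honda's enumeration and exploits the symmetry $d\leftrightarrow\rho(d)$ of~\eqref{e:rho}, which encodes the orientation reversal $-\T_{-A(d)}=\T_{-A(\rho(d))}$, in order to identify the candidate contact structure with its conjugate. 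I expect this identification---the ruling out of a second universally tight, torsion-free contact structure on the \emph{oriented} manifold $\T_{-A(d)}$---to be the main obstacle, as it requires tracking orientations simultaneously through Honda's normal forms and through the Li--Mak construction of $\xi_Y$.
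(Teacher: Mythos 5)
Your decomposition into the elliptic case (Proposition~\ref{p:ell-class}) and the hyperbolic case (Theorem~\ref{t:hyp-class}) matches the paper, and your treatment of Giroux torsion (Stein fillability plus Gay's theorem) is exactly what the paper does. The elliptic case is actually simpler than you make it: for the monodromies $-(T^{-1}S)^2$, $-S$, $-T^{-1}S$ and $T^{-1}S$, Honda's tables give exactly \emph{one} isotopy class of tight contact structures without Giroux torsion, and it is universally tight (there are no virtually overtwisted structures on these bundles at all), so no comparison of $\spc$ structures or $d_3$ invariants is needed, and there is no ``conjugate'' class to rule out; fillability of $\xi_Y$ immediately pins it down. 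Indeed the paper proves the stronger statement that $\xi_Y$ is the unique \emph{tight} torsion-free structure there.

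The genuine gap is in the hyperbolic case, precisely at the step you flag as the crux but then dispatch by assertion: you claim $\xi_Y$ is universally tight because it ``agrees with the canonical, Milnor fillable contact structure on a cusp singularity link.'' That identification is not established and is essentially equivalent to what must be proved: $\xi_Y$ is defined as the contact structure induced on the boundary of a \emph{concave} neighborhood of a non--negative-definite spherical divisor in a blowup of $\CP$, not on the pseudoconvex boundary of a resolution, and the paper explicitly does not know how to show universal tightness of such structures in general (this is the content of Conjecture~\ref{conj}). The paper's actual argument is indirect and goes through fillings: Lemma~\ref{l:vot} records that $\T_{-A(d)}$ carries $(d_1-1)\cdots(d_m-1)$ virtually overtwisted tight structures and a single universally tight torsion-free one; Lemma~\ref{l:VOT_filling} constructs, by Legendrian surgery on the link $\bL$ and a double-cover comparison with the Bhupal--Ozbagci Stein structures on $P_+(d')$, a Stein filling with $b_1=1$ for \emph{every} virtually overtwisted tight structure on $\T_{-A(d)}$; and Theorem~\ref{t:ell1-hyp} (where embeddability of $d$ is used, to run the McDuff/blow-down argument on the cap) shows every Stein filling of $(Y,\xi_Y)$ has $b_1=0$. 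Hence $\xi_Y$ cannot be virtually overtwisted, and Honda's classification then gives uniqueness of the universally tight torsion-free class directly --- the symmetry $d\leftrightarrow\rho(d)$ plays no role in collapsing a structure with its conjugate. Without either the filling argument or an actual proof of your Milnor-fillability claim, your sketch does not establish universal tightness of $\xi_Y$, which is the heart of the theorem.
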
 

Theorem~\ref{t:class} has led us to formulate Conjecture~\ref{conj} below. Before we can state it we need to introduce some notation. 
Let $(W,\om)$ be a symplectic 4--manifold. A collection $D = C_1\cup\cdots\cup C_n$ of 
finitely many closed, embedded, symplectic surfaces in $W$ intersecting transversely 
and positively, and such that no three of them have a point in common will be called a \textbf{symplectic divisor}. 
When the symplectic form $\om$ is part of a K\"ahler structure on $W$ and the surfaces $C_i$ are 
smooth, complex curves, we will call $D$ a~\textbf{complex divisor}. 
When each $C_i$ is a 2--sphere the divisor will be called~\textbf{spherical}. 

\begin{conj}\label{conj}
Let $(X,\om)$ be a closed symplectic 4--manifold obtained as a symplectic blowup of $\CP$ with 
the standard K\"ahler form. Suppose that  
\[
D=C_1\cup\cdots\cup C_n\subset X 
\]
is a circular, spherical symplectic divisor such that $C_i\cdot C_i\in\{0,+1\}$ for some $i\in\{1,\ldots,n\}$. Then,
any contact structure induced on the boundary of a concave neighbourhood of $D$ is universally tight.
\end{conj}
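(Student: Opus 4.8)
The plan is to push through the method behind Theorem~\ref{t:class}, of which the conjecture is a broad generalisation. The first step is a reduction to combinatorial data. Since $D$ is circular and spherical, a regular neighbourhood $N(D)$ is a cyclic plumbing of disc bundles over $S^2$ and $Y=\partial N(D)$ is a torus bundle over $S^1$; a plumbing calculation identifies it with $\T_{\pm A(\mathbf e)}$ in the notation of~\eqref{e:A(d)}, where $\mathbf e=(C_1\cdot C_1,\dots,C_n\cdot C_n)$, the ambiguity depending on orientation and gluing conventions. By the symplectic neighbourhood theorem for symplectic divisors (see~\cite{LM14} and the references therein, going back to work of Gay--Stipsicz and McLean), the germ of $\om$ along $D$ — hence the contact structure on the boundary of a concave neighbourhood — is determined up to contactomorphism by $\mathbf e$ together with the areas $\int_{C_i}\om$, and these areas can be changed arbitrarily by a symplectic deformation of $(X,\om)$ that does not affect $(Y,\xi)$. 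So $(Y,\xi)$ depends only on the cyclic sequence $\mathbf e$, and it suffices to treat one convenient model $(X,\om,D)$ for each admissible $\mathbf e$.

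The second step uses the hypotheses that $(X,\om)$ is a symplectic blowup of $(\CP,\om_{FS})$ and that some $C_i$ has self-intersection $0$ or $+1$. By McDuff's theorems on symplectic spheres in rational $4$--manifolds, a symplectic sphere of square $0$ is a fibre of a symplectic $S^2$--fibration of $X$, while a symplectic sphere of square $+1$ is symplectically isotopic to the proper transform of a line (equivalently, becomes a square-$0$ sphere after one blowup disjoint from $D$); in both cases one gets strong control on $X$ and on the position of $C_i$ relative to the rest of $D$. Combining this with the classification of circular spherical symplectic divisors in rational surfaces implicit in~\cite{Li13} and~\cite{LM14}, and with the blowup/$\rho$ combinatorics recalled above, I would reduce to the case in which $D$ is symplectic-deformation equivalent to a standard model — e.g. the toric boundary divisor of a toric blowup of $\CP$, possibly after perturbation — where $N(D)$, the convex filling $W=X\setminus\Int N(D)$, and a contact form on $Y$ are all explicit.

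The third step proves universal tightness in the model. One route: take a torus fibre $T\subset Y$, make it convex, and read off its dividing slope together with a basic-slice decomposition of the complementary $T^2\times I$; the self-intersections $C_i\cdot C_i$, and especially the anchoring sphere with $C_i\cdot C_i\in\{0,+1\}$, determine the signs of all the basic slices and force the decomposition to be monotone (no sign change). By Honda's classification of tight contact structures on torus bundles this is exactly the property distinguishing the universally tight structures from the virtually overtwisted ones, and tightness itself follows from the Gromov--Eliashberg theorem applied to the symplectic filling $W$. An alternative route in the toric model is to pull the Liouville structure of $W$ back to the infinite cyclic cover and then to the universal cover $\widetilde Y\cong\R^3$, producing a non-compact Liouville filling of $\widetilde\xi$ with controlled ends and concluding tightness of $\widetilde\xi$, hence universal tightness of $\xi$, directly.

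The step I expect to be the main obstacle is this last one: universal tightness is strictly stronger than fillability — contact structures arbitrarily close to $\xi$ on the same $Y$ are virtually overtwisted — so the two hypotheses must be used essentially, the sphere of nonnegative square to anchor the signs in the basic-slice decomposition and the rationality of $(X,\om)$ to rule out decompositions built from incompatible slices. Making the normalisation of $D$ in the second step rigorous (checking that it never changes the contactomorphism type of $(Y,\xi)$, and tracking how blowups of $X$ interact with the cyclic configuration), and then controlling either the sign bookkeeping in Honda's theory or the geometry at infinity of the covers of $W$, are where the real work lies.
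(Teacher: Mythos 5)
The statement you are trying to prove is stated in the paper as Conjecture~\ref{conj}; the paper offers no proof of it, so there is nothing to compare your argument against. What the paper does prove is the much narrower Theorem~\ref{t:class}, and it does so by a route entirely different from yours: for the elliptic cases Honda's classification already shows there is a unique tight structure without Giroux torsion on those bundles and that it is universally tight; for the hyperbolic cases $\T_{-A(d)}$ the argument is indirect --- Lemma~\ref{l:VOT_filling} constructs, for \emph{every} virtually overtwisted tight structure on $\T_{-A(d)}$, a Stein filling with $b_1=1$, while Theorem~\ref{t:ell1-hyp} forces every Stein filling of $(Y,\xi_Y)$ to have $b_1=0$; hence $\xi_Y$ cannot be virtually overtwisted. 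No convex surface theory, basic-slice bookkeeping, or normalisation of the divisor to a toric model appears anywhere in the paper.

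As a proof of the conjecture itself, your proposal has genuine gaps at both ends. In the first step, the claim that the contact structure on the boundary of a concave neighbourhood depends only on the cyclic sequence of self-intersections $\mathbf e$ (and areas) is not established by the cited sources; Li--Mak show independence of the choice of concave neighbourhood for a fixed divisor in a fixed $(X,\om)$, not that two divisors with the same combinatorics in different blowups of $\CP$ induce contactomorphic structures, and the ``classification of circular spherical symplectic divisors in rational surfaces'' you invoke to reach a standard toric model is not in \cite{Li13} or \cite{LM14}. In the third step, the assertion that the sphere with $C_i\cdot C_i\in\{0,+1\}$ ``forces the basic-slice decomposition to be monotone'' is precisely the content of the conjecture restated in Honda's language, and you yourself flag it as the main obstacle without supplying an argument. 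What you have is a plausible plan of attack, not a proof; if you want a result you can actually establish, the paper's indirect strategy (rule out virtual overtwistedness by producing a filling whose topology is incompatible with the constraints on fillings of $\xi_Y$) is the one that is known to close.
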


The paper is organized as follows. In Section~\ref{s:caps} we use the work of Li--Mak~\cite{LM14} to prove Theorem~\ref{t:caps}, which says that, given a bundle $Y\in\F$, there exists a compact, symplectic 4--manifold with strictly $\om$--concave 
boundary $(W_Y,\om_Y)$ such that $\del W_Y = -Y$ and $(W_Y,\om_Y)$ embeds 
symplectically in a (deformation of) a blowup of the complex projective plane. 
The symplectic 4--manifolds $(W_Y,\om_Y)$ are used in Section~\ref{s:fillings} to classify, up to diffeomorphisms, 
the Stein fillings of the contact 3--manifolds $(Y,\xi_Y)$, where $\xi_Y$ is the positive contact structure on 
$Y$ induced by the $\om$--concave structure on the boundary of $W_Y$. Theorem~\ref{t:main} follows combining Theorems~\ref{t:ell1-hyp}, \ref{t:ell2} and~\ref{t:par}. In Section~\ref{s:fillings} we also prove Proposition~\ref{p:distfill}, showing the existence of infinitely many hyperbolic torus bundles $Y\in\F$ such that $(Y,\xi_Y)$ admits non-homotopy equivalent Stein fillings. In Section~\ref{s:contstr} we identify the contact structures $\xi_Y$ for some elliptic and hyperbolic bundles by proving Proposition~\ref{p:ell-class} and Theorem~\ref{t:hyp-class}, which imply Theorem~\ref{t:class}. We also give explicit constructions of Stein fillings for $(Y,\xi_Y)$ when $Y$ 
is an elliptic torus bundle of type $\T_{-A(\ep)}$ with $\ep\in\{-1,0,1\}$, or $\T_{A(1)}$.

\subsection*{Acknowledgements} 
The authors wish to thank Youlin Li for suggesting that Theorem~\ref{t:par}, originally proved for Stein 
fillings, might hold more generally for minimal, strongly convex symplectic fillings. 
The present work is part of the authors' activities within CAST, a Research Network
Program of the European Science Foundation. Both authors were partially supported by 
the PRIN--MIUR research project 2010--11 ``Variet\`a reali e complesse: geometria, topologia e analisi armonica'', 
the first author was partially supported by the FIRB research project "Topologia e
geometria di variet\`a in bassa dimensione" and by an ERC Exchange Grant.

\section{Construction of symplectic caps}\label{s:caps}

In this section we prove that for each torus bundle $Y$ belonging to the family $\F$ of Section~\ref{s:intro}, 
there exists  a compact, symplectic 4--manifold with strictly $\om$--concave 
boundary $(W_Y,\om_Y)$ such that $\del W_Y = -Y$ and $(W_Y,\om_Y)$ embeds 
symplectically in a (deformation of) a symplectic blowup of the standard symplectic $\CP$. 
We call a symplectic 4--manifold $(W_Y,\om_Y)$ as above a \textbf{symplectic cap} of $Y$. 
Our main tool to construct the symplectic 4--manifolds 
$W_Y$ will be the following theorem by Li and Mak. 
\begin{thm}[{\cite{LM14}*{Theorem 1.3}}]\label{t:limak}
Let $D\subset (W,\om_0)$ be a symplectic divisor. If the intersection form of $D$ is not negative definite and the restriction of $\om_0$ to the boundary of a closed regular neighborhood of $D$ is exact, then $\om_0$ can be deformed through a family of symplectic forms $\om_t$ on $W$ keeping $D$ symplectic and such that, for any neighborhood $N$ of $D$, there is an $\om_1$--concave neighborhood of $D$ inside $N$. 
\end{thm}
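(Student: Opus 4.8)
Since the statement is quoted verbatim from Li--Mak~\cite{LM14}, I only indicate how I would prove it. The argument has three ingredients: a normal form for a neighbourhood of a symplectic divisor, an explicit concave model built on the associated plumbing, and a Moser-type interpolation. First I would recall, from the theory of symplectic neighbourhoods of configurations of surfaces, that after a $C^0$-small isotopy the $C_i$ can be made $\om_0$-orthogonal at every intersection point, and that a neighbourhood of such a configuration is symplectomorphic to a standard \textbf{plumbing} $P(\Ga,\{a_i\})$: disc bundles $E_i\to C_i$ of Euler number $C_i\cdot C_i$ carrying the fibrewise-standard form with $\om_0(C_i)=a_i$, glued symplectically according to the dual graph $\Ga$ of $D$ (vertices the $C_i$ weighted by genus and self-intersection, edges the intersection points). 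Up to symplectomorphism near $D$ this model depends only on $\Ga$ and on the areas $a_i$, and after a deformation keeping $D$ symplectic the $a_i$ may be taken in a suitable open cone of $H^2(W;\R)$-values.

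Next I would produce the concave structure on the plumbing, and this is where both hypotheses enter. The boundary $\del P$ is the graph manifold of $\Ga$, and the class of $\om_0|_{\del P}$ in $H^2(\del P;\R)$ is a linear function of $(a_1,\dots,a_n)$ governed by the intersection matrix $Q_D=(C_i\cdot C_j)$; the exactness hypothesis says exactly that this class vanishes, which is what is needed in order to write $\om_1=d\la$ on a collar of $\del P$. To make the Liouville field of $\la$ point \emph{into} $P$ I would write $\om_1=d(f\be)$ for a suitable $1$-form $\be$ on $\del P$ and a radial profile $f$, so that concavity reduces to a system of linear inequalities on the $a_i$ and the Euler numbers $C_i\cdot C_i$; the crux is a linear-algebra lemma asserting that when $Q_D$ is not negative definite one can choose positive areas $a_i$ (a positive vector with $a^{T}Q_D a\ge 0$, compatibly with the graph constraints) for which this system is solvable. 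Morally, the obstruction to a concave symplectic filling of a graph manifold is precisely negative-definiteness of $Q_D$ — the symplectic counterpart of Grauert's contractibility criterion — so the hypothesis is sharp.

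Finally, given an arbitrary neighbourhood $N\supset D$, I would build the concave plumbing $Z\subset N$ as above and interpolate by a Moser/convexity argument between $\om_0$ away from $N$ and the concave model near $D$, through symplectic forms $\om_t$ keeping every $C_i$ symplectic, obtaining $\om_1$ with an $\om_1$-concave neighbourhood of $D$ inside $N$. I expect the genuine difficulty to lie in the middle step: extracting the positive coefficients $a_i$ from the mere failure of negative-definiteness, and then realising them by an honest symplectic form whose Liouville field points inward — in particular smoothing, compatibly with concavity, the corners of the plumbing model at each intersection point of the divisor, where two "radial" directions interact. The normal-form input of the first step is technical but essentially standard.
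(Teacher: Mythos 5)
The paper does not prove this statement: it is quoted verbatim from Li--Mak (\cite{LM14}, Theorem~1.3) and used as a black box, so there is no in-paper argument to compare yours against. Judged against Li--Mak's actual proof, your sketch identifies the right skeleton: reduction to a standard plumbing model via a symplectic neighbourhood theorem for $\om_0$-orthogonal configurations, an explicit Liouville $1$-form on the plumbing whose inward-pointing condition becomes a linear-algebraic constraint governed by $Q_D=(C_i\cdot C_j)$, and a deformation of the symplectic areas $a_i$ (this is the ``deformation through $\om_t$'' in the statement, not a Moser isotopy in a fixed cohomology class) to make that constraint solvable. The one point worth sharpening is the middle step: the precise criterion in Li--Mak is not ``a positive vector with $a^{T}Q_Da\ge 0$'' but the solvability of $Q_Dz=a$ with all $z_i>0$ for some admissible area vector $a$ with $a_i>0$; the content of their key lemma is that failure of negative definiteness (together with connectedness of the graph) lets one deform $a$ within the allowed cone so that such a $z$ exists, and the $z_i$ then become the coefficients of the Liouville field on the disc-bundle pieces. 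The exactness hypothesis is used exactly as you say, and the corner-smoothing at the nodes is the technical but standard part. So your outline is a faithful account of the cited proof, with the caveat that the linear-algebra lemma should be stated in terms of $Q_Dz=a$ rather than the quadratic form.
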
 
In the proof of Theorem~\ref{t:caps} we will apply
Theorem~\ref{t:limak} to certain suitable spherical complex divisors in blowups
of the complex plane $\CP$ endowed with their standard K\"ahler structure. 
We will obtain the divisors that we need by blowing up the following 
two basic configurations of immersed complex spheres in $\CP$: 
\begin{enumerate}
\item[$(3\ell)$] 
three complex lines in general position;
\item[$(\ell{\mathcal C}_2)$] 
a line and a smooth conic in general position.
\end{enumerate}
Regular neighborhoods of  Configurations $(3\ell)$ and $(\ell{\mathcal C}_2)$ are 4--dimensional plumbings given, 
in the notation of Neumann~\cite{Ne81}, by the graphs of Figure~\ref{f:basic_confs}.
\begin{figure}[ht]
\centering
$\xygraph{
!{<0cm,0cm>;<1cm,0cm>:<0cm,1cm>::}
!{(2,1) }*+{\bullet}="a"
!{(2.4,1) }*+{+1}
!{(0,2) }*+{\bullet}="b"
!{(-0.4,2) }*+{+1}
!{(0,0) }*+{\bullet}="c"
!{(-0.4,0) }*+{+1}
!{(-1,1) }*+{(3\ell)}
"a"-"b"_+
"b"-"c"_+
"a"-"c"^+
}
\hspace{2cm}
\xygraph{
!{<0cm,0cm>;<1cm,0cm>:<0cm,1cm>::}
!{(-0.2,0) }*+{\bullet}="a"
!{(-0.2,2) }*+{\bullet}="b"
!{(0.25,2) }*+{+4}
!{(0.25,0) }*+{+1}
!{(-1.4,1) }*+{(\ell{\mathcal C}_2)}
"a"-@/^/"b"^+
"a"-@/_/"b"_+}$
\caption{Plumbing graphs of the two basic configurations}
\label{f:basic_confs}
\end{figure}  

\subsection*{Elliptic bundles} 
Let $\T_A$ be a torus bundle with $|\trace(A)|<2$. It follows from the proof of~\cite{Ne81}*{Proposition~2.1} 
(see~\cite{Ne81}*{page 307}) that there are exactly 
six such torus bundles up to orientation-preserving diffeomorphisms, i.e.~$\T_{\pm A(\ep)}$, with $\ep = -1,0,1$ (here we are using 
Notation~\eqref{e:A(d)}). We claim that these bundles are the oriented boundaries of the six 
4--dimensional plumbings given by Figure~\ref{f:ell_bundles}. 
\begin{figure}[ht]
\centering
$\xygraph{
!{<0cm,0cm>;<1cm,0cm>:<0cm,1cm>::}
!{(2,1) }*+{\bullet}="a"
!{(2.55,1) }*+{\ep-1}
!{(0,2) }*+{\bullet}="b"
!{(-0.4,2) }*+{+1}
!{(0,0) }*+{\bullet}="c"
!{(-0.3,0) }*+{0}
"a"-"b"_+
"b"-"c"_+
"a"-"c"^+
}
\hspace{2cm} 
\xygraph{
!{<0cm,0cm>;<1cm,0cm>:<0cm,1cm>::}
!{(-0.2,0) }*+{\bullet}="a"
!{(-0.2,2) }*+{\bullet}="b"
!{(0.35,2) }*+{\ep-2}
!{(0.2,0) }*+{-1}
"a"-@/^/"b"^+
"a"-@/_/"b"_+
}$
\caption{Plumbing graphs for elliptic torus bundles, $\ep=-1,0,1$.}
\label{f:ell_bundles}
\end{figure} 
Indeed, the proof of~\cite{Ne81}*{Theorem~6.1} shows that the bundle given by the graph on 
the left of Figure~\ref{f:ell_bundles} has monodromy 
\[
A(1-\ep,0,-1) = 
\begin{pmatrix} 
\ep & -1\\
1 & 0
\end{pmatrix}
= -A(-\ep),
\]
while the monodromy of the bundle given by the graph on the right is  
\[
A(1,2-\ep) = 
\begin{pmatrix} 
1-\ep & -\ep+2\\
-1 & -1
\end{pmatrix}
=
\begin{pmatrix} 
1 & -1\\
0 & 1
\end{pmatrix} 
A(-\ep)
\begin{pmatrix} 
1 & -1\\
0 & 1
\end{pmatrix} ^{-1}.
\]

\begin{lemma}\label{l:ell_sd}
For $\ep\in\{-1,0,1\}$ the graph on the left-hand side of Figure~\ref{f:ell_bundles} is dual to 
the intersection graph of a spherical complex divisor $D\subset \CP\#(3-\ep)\CPb$, while 
the graph on the right-hand side of Figure~\ref{f:ell_bundles} is dual to the intersection graph 
of a spherical complex divisor $D\subset\CP\#(8-\ep)\CPb$.
\end{lemma}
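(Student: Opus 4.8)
The plan is to build both divisors explicitly by blowing up one of the two basic configurations of Figure~\ref{f:basic_confs}, tracking how each blowup modifies the plumbing graph. Recall that if one blows up a point lying on a single component $C$ of a configuration, the self-intersection of (the proper transform of) $C$ drops by one and a new $(-1)$-sphere is attached to it; if one blows up a transverse intersection point of two components $C$, $C'$, then both self-intersections drop by one, the edge between them is deleted, and a new $(-1)$-sphere is joined to both $C$ and $C'$. Since we want the \emph{dual} graph of the divisor to be the graph of Figure~\ref{f:ell_bundles}, it is convenient to recall (cf.~Neumann~\cite{Ne81}) that passing to the dual graph reverses all signs of the weights and of the self-intersections along the cycle in a controlled way; in practice I will simply aim to produce, after blowups, a complex divisor whose intersection graph is the one dual to Figure~\ref{f:ell_bundles}, and verify the ambient manifold is the asserted blowup of $\CP$.

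For the left-hand graph of Figure~\ref{f:ell_bundles}: start from Configuration $(3\ell)$, three lines $L_1,L_2,L_3$ in general position in $\CP$, whose dual plumbing graph is the triangle with all weights $+1$. First blow up the point $L_2\cap L_3$ (chosen so that the vertex that will carry the weight $\ep-1$ is the image of $L_1$); this turns the triangle into a square, with an exceptional $(-1)$-sphere inserted between the proper transforms of $L_2$ and $L_3$, and leaves $L_1$ with self-intersection $+1$. Then perform $2-\ep$ further blowups at generic points of $L_1$, lowering its self-intersection from $+1$ to $\ep-1$ and hanging off $2-\ep$ disjoint $(-1)$-spheres; discard those $(-1)$-spheres and the one on the square's fourth vertex as needed so that what remains of the cycle is exactly a triangle carrying weights $\ep-1$, (the transform of $L_2$), (the transform of $L_3$) — which one checks is dual to the left-hand graph. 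The total number of blowups is $1+(2-\ep)=3-\ep$, so the ambient manifold is $\CP\#(3-\ep)\CPb$, as claimed. (The precise bookkeeping of which exceptional curves are kept in $D$ and which only contribute to the ambient blowup is the only fussy point; one arranges the kept curves so that the self-intersections and the incidence pattern match the dual of Figure~\ref{f:ell_bundles} on the nose.)

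For the right-hand graph of Figure~\ref{f:ell_bundles}: start from Configuration $(\ell\mathcal{C}_2)$, a line $L$ and a smooth conic $Q$ meeting at two points, whose plumbing graph is a bigon with weights $+1$ and $+4$. Blow up one of the two intersection points $L\cap Q$ to break one of the two edges (inserting a $(-1)$-sphere), then perform further blowups at generic points of $Q$ to bring its self-intersection down from $+4$ to $\ep-2$; this costs $1+(6-\ep)=7-\ep$ blowups, landing in $\CP\#(7-\ep)\CPb$ — so one extra blowup is needed to reach $\CP\#(8-\ep)\CPb$, which I would absorb by instead breaking \emph{both} edges of the bigon (two blowups, producing the required cycle length) and then doing $6-\ep$ blowups on $Q$; the total is $2+(6-\ep)=8-\ep$. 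One then checks that the resulting complex divisor — the proper transforms of $L$ and $Q$, together with the two exceptional spheres that replaced the broken edges — has intersection graph dual to the right-hand graph of Figure~\ref{f:ell_bundles}, with the vertex $\ep-2$ coming from $Q$ and the vertex $-1$ from $L$.

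The main obstacle is the sign/duality bookkeeping: one must be careful that the graph asserted in the statement is the \emph{dual} of the divisor's intersection graph, and that the cyclic ordering, the individual self-intersection numbers, and the choice of which exceptional divisors belong to $D$ all match after the sequence of blowups. Once a consistent sequence of blowups is fixed and the corresponding graph transformations are written out, verifying the match is a finite check; I would present it as an explicit table of self-intersections before and after each blowup. Everything else is standard: blowups of $\CP$ at distinct points yield $\CP\#k\CPb$, and proper transforms of complex curves are again complex curves, so $D$ is automatically a complex (hence spherical) divisor.
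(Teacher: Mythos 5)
Your construction blows up at the wrong points, and the resulting divisors do not have the intersection graphs required by the lemma. For the left-hand graph of Figure~\ref{f:ell_bundles} you blow up the node $L_2\cap L_3$ and then propose to ``discard'' the exceptional sphere to turn the resulting square back into a triangle. This does not work: once you remove a component from a length-four cycle, the two neighbours of the removed vertex no longer intersect, so what remains is a chain $\tilde L_2 - L_1 - \tilde L_3$, not a cycle. The weights are also wrong: your configuration carries self-intersections $(\ep-1,0,0)$, whereas the target triangle has weights $(\ep-1,+1,0)$ --- one vertex must keep self-intersection $+1$, which forces you not to touch one of the three lines at all. Similarly, for the right-hand graph your ``break both edges'' variant produces a four-component cycle $\tilde L - E_1 - \tilde Q - E_2$ with $\tilde Q$ of square $\ep-4$, whereas the target is a two-component bigon with weights $-1$ and $\ep-2$. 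In both cases the correct move, which is what the paper does, is to blow up only at \emph{generic smooth points} of the components, never at the nodes: this preserves every intersection point of the configuration (hence the cycle structure of the graph) and simply lowers the self-intersection of the component being blown up. Concretely: for the left graph, blow up one generic point of $\ell_2$ and $2-\ep$ generic points of $\ell_3$ (total $3-\ep$ blowups) and take $D$ to be the proper transform of $\ell_1\cup\ell_2\cup\ell_3$; for the right graph, blow up two generic points of the line and $6-\ep$ generic points of the conic (total $8-\ep$) and take the proper transform of $\ell\cup\mathcal{C}$.

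A secondary confusion: you spend effort worrying that ``dual'' reverses signs of weights. It does not, in the sense relevant here --- ``the graph is dual to the intersection graph of $D$'' is just the statement that the graph of Figure~\ref{f:ell_bundles} \emph{is} the dual (i.e.\ intersection) graph of $D$, with vertices the components weighted by self-intersection and edges recording transverse positive intersections. Once you blow up only at generic points, the match with Figure~\ref{f:ell_bundles} is immediate and no sign bookkeeping is needed.
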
 

\begin{proof} 
(1) Let the $\ell_1, \ell_2, \ell_3$ be the three generic lines of the basic configuration $(3\ell)$ inside $\CP$ with 
the Fubini--Study form. Blow up $\CP$ at one generic point of $\ell_2$ and at $2-\ep$ generic points 
of $\ell_3$, and let $D\subset\CP\#(3-\ep)\CPb$ be the proper transform of $\ell_1\cup\ell_2\cup\ell_3$. 

(2) $D$ is obtained as the proper transform of the configuration $\ell\mathcal{C}_2$ in $\CP$ blown up at 
two generic points of the line and at $6-\ep$ generic points of the conic.
\end{proof} 

\subsection*{Parabolic bundles}
Arguing as in the proof of~\cite{Ne81}*{Theorem~6.1} and using Notation~\eqref{e:A(d)} 
it is easy to check that the boundary of the plumbing given by the graph of 
Figure~\ref{f:par_bundles} is a (parabolic) torus bundle with monodromy 
$A(0,-n)=-\left(\begin{smallmatrix}1&n\\0&1\end{smallmatrix}\right)$. 
\begin{figure}
\centering
$\xygraph{
!{<0cm,0cm>;<1cm,0cm>:<0cm,1cm>::}
!{(-0.2,0) }*+{\bullet}="a"
!{(-0.2,2) }*+{\bullet}="b"
!{(0.3,2) }*+{n}
!{(0.2,0) }*+{0}
"a"-@/^/"b"^+
"a"-@/_/"b"_+
}$
\caption{Plumbing graphs for parabolic bundles, $n\in\Z$.}
\label{f:par_bundles}
\end{figure}

\begin{lemma}\label{l:par_sd}
For every integer $n\leq 4$ the graph of Figure~\ref{f:par_bundles} is dual to the 
intersection graph of a spherical complex divisor $D\subset\CP\#(5-n)\CPb$.
\end{lemma}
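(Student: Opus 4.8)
The plan is to build $D$ by blowing up the basic configuration $(\ell\mathcal{C}_2)$ of Figure~\ref{f:basic_confs}, following the scheme of the proof of Lemma~\ref{l:ell_sd}(2) but performing one fewer blowup on the line.

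Concretely, I would fix $\CP$ with its Fubini--Study K\"ahler form, a projective line $\ell$ and a smooth conic $\mathcal{C}_2$ in general position. By B\'ezout's theorem $\ell$ and $\mathcal{C}_2$ meet transversely in exactly two points, and $\ell\cdot\ell=1$, $\mathcal{C}_2\cdot\mathcal{C}_2=4$, so a closed regular neighbourhood of $\ell\cup\mathcal{C}_2$ is the plumbing on the right of Figure~\ref{f:basic_confs}. Since $n\le 4$ the integer $4-n$ is nonnegative; choose $4-n$ pairwise distinct points on $\mathcal{C}_2$ together with one further point on $\ell$, all of them lying off $\ell\cap\mathcal{C}_2$ and otherwise generic, and blow $\CP$ up at these $1+(4-n)=5-n$ points, obtaining $X\cong\CP\#(5-n)\CPb$ with its blown-up K\"ahler form.

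Let $D=\tilde{\ell}\cup\tilde{\mathcal{C}_2}\subset X$ be the union of the proper transforms of $\ell$ and of $\mathcal{C}_2$. The proper transform of a line, and of a smooth conic, under blowups at smooth points is again a smooth rational curve, so $D$ is a spherical complex divisor. Since no blown-up point lies on both $\ell$ and $\mathcal{C}_2$, and the points are generic, $\tilde{\ell}$ and $\tilde{\mathcal{C}_2}$ still meet transversely in two points, so $\tilde{\ell}\cdot\tilde{\mathcal{C}_2}=2$; exactly one blown-up point lies on $\ell$, so $\tilde{\ell}\cdot\tilde{\ell}=1-1=0$; and exactly $4-n$ lie on $\mathcal{C}_2$, so $\tilde{\mathcal{C}_2}\cdot\tilde{\mathcal{C}_2}=4-(4-n)=n$. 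Hence the intersection graph of $D$ is the two-vertex graph with a double edge and vertex weights $0$ and $n$, that is, exactly the graph of Figure~\ref{f:par_bundles}, as claimed.

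I do not expect any real obstacle: the argument is bookkeeping of self-intersection numbers plus a B\'ezout count pinning down the double edge and the transversality of the two intersection points, exactly parallel to Lemma~\ref{l:ell_sd}(2). The only place the hypothesis is used is the requirement $4-n\ge 0$, which is what makes ``blow up $4-n$ points of the conic'' meaningful; for $n>4$ the conic already carries the largest self-intersection ($+4$) available in the configuration $(\ell\mathcal{C}_2)$, so this construction no longer produces the desired graph. (The intersection form of $D$ is $\left(\begin{smallmatrix}0&2\\2&n\end{smallmatrix}\right)$, of determinant $-4$, hence indefinite and in particular not negative definite; this is what will allow Theorem~\ref{t:limak} to be applied to $D$ in the proof of Theorem~\ref{t:caps}.)
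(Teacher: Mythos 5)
Your proof is correct and is essentially identical to the paper's: the paper also obtains $D$ as the proper transform of the basic configuration $(\ell\mathcal{C}_2)$ after blowing up at $4-n$ generic points of the conic and one generic point of the line. Your additional bookkeeping of the self-intersection numbers and the B\'ezout count just makes explicit what the paper leaves to the reader.
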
 

\begin{proof} 
When $n\leq 4$ the graph of Figure~\ref{f:par_bundles} is the intersection graph of the proper transform 
of the basic configuration $(\ell{\mathcal C}_2)$ in $\CP$, obtained by blowing up at $4-n$ generic points  
of the conic ${\mathcal C}$ and one generic point of the line $\ell$.
\end{proof} 

\subsection*{Hyperbolic bundles}
Let $\T_A$ by a hyperbolic bundle with $\trace(A)< -2$. As explained in Section~\ref{s:intro}, 
$\T_A=\T_{-A(d)}$, where $d=(d_1,\ldots, d_m)\in\Z^m$, $d_i\geq 2$ for all $i$ and $d_i\geq 3$ 
for some $i$, and $-\T_{-A(d)} = \T_{-A(\rho(d))}$, where $\rho(d) = (c_1,\ldots, c_\ell)$ is defined by Equation~\eqref{e:rho}. Moreover, by~\cite{Ne81}*{Theorem~7.1} $\T_{-A(\rho(d))}$ is the 
boundary of the 4--dimensional plumbing given by Figure~\ref{f:hyp_bundles1}.
\begin{figure}[ht]
\centering
$ \xygraph{
!{<0cm,0cm>;<1cm,0cm>:<0cm,1cm>::}
!{(0,1) }*+{\bullet}="c"
!{(1.6,1) }*+{\bullet}="d"
!{(3.2,1) }*+{\bullet}="e"
!{(5.6,1) }*+{\bullet}="f"
!{(7.2,1) }*+{\bullet}="g"
!{(0,1.4) }*+{-c_1}
!{(1.6,1.4) }*+{-c_2}
!{(3.2,1.4) }*+{-c_3}
!{(5.6,1.4) }*+{-c_{\ell-1}}
!{(7.2,1.4) }*+{-c_\ell}
!{(4.4,1)}*+{\cdots}
"c"-"d"^+
"d"-"e"^+
"f"-"g"^+
"g"-@/^0.5cm/"c"^{-}
}$
\caption{Plumbing graphs for $-\T_A$ with $\T_A$ hyperbolic and $\trace(A)\leq -3$.}
\label{f:hyp_bundles1}
\end{figure} 
Using Neumann's plumbing calculus (i.e.~\cite{Ne81}*{Proposition~2.1}) it is easy to check that when $\ell>1$  
the bundle $-\T_A = \T_{-A(\rho(d))}$ is also the oriented boundary of the plumbing 
given by the graph on the left of Figure~\ref{f:hyp_bundles2}, while when $\ell=1$ it is  
given by the graph 
on the right of the same figure (observe that in this case $c_1\geq 3$ by~\eqref{e:rho}). 
\begin{figure}[ht]
\centering
$ \xygraph{
!{<0cm,0cm>;<1cm,0cm>:<0cm,1cm>::}
!{(0,1) }*+{\bullet}="c"
!{(1.6,1) }*+{\bullet}="d"
!{(3.2,1) }*+{\bullet}="e"
!{(5.6,1) }*+{\bullet}="f"
!{(7.2,1) }*+{\bullet}="g"
!{(0,1.4) }*+{+1}
!{(1.6,1.4) }*+{-c_1+1}
!{(3.2,1.4) }*+{-c_2}
!{(5.6,1.4) }*+{-c_{\ell-1}}
!{(7.2,1.4) }*+{-c_\ell + 1}
!{(4.4,1)}*+{\cdots}
"c"-"d"^+
"d"-"e"^+
"f"-"g"^+
"g"-@/^0.5cm/"c"^+
}$
\hspace{1cm} 
$\xygraph{
!{<0cm,0cm>;<1cm,0cm>:<0cm,1cm>::}
!{(-0.2,0) }*+{\bullet}="a"
!{(-0.2,2) }*+{\bullet}="b"
!{(0.55,2) }*+{-c_1+2}
!{(0.2,0) }*+{+1}
"a"-@/^/"b"^+
"a"-@/_/"b"_+
}$
\caption{Alternative plumbing graphs for $-\T_A$ with $\trace(A)\leq -3$.}
\label{f:hyp_bundles2}
\end{figure} 

\begin{lemma}\label{l:hyp_sd}
Let $d=(d_1,\ldots, d_m)\in\Z^m$ with $d_i\geq 2$ for all $i$, $d_i\geq 3$ 
for some $i$ and let $(c_1,\ldots, c_\ell) = \rho(d)$.  Suppose that either $d$ is embeddable or $\ell=1$. 
Then, there is a spherical complex divisor in a blowup of $\CP$ whose dual intersection graph 
equals the graph on the left of Figure~\ref{f:hyp_bundles2} if $d$ is embeddable and the graph on the right 
of the same figure if $\ell=1$. 
\end{lemma}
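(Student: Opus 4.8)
The plan is to construct the required spherical complex divisors explicitly by starting from one of the two basic configurations $(3\ell)$ or $(\ell\mathcal{C}_2)$ in $\CP$ and performing a carefully chosen sequence of blowups, each done at a point lying on one or two of the complex curves already in play. Since all the curves involved are lines, conics, or their proper transforms (and the exceptional divisors of the blowups), everything remains a smooth rational curve, hence a sphere, and the divisor stays complex; the combinatorial content is entirely in tracking how self-intersections and intersection patterns change under blowup. Thus the proof is really a bookkeeping argument guided by the definitions of \emph{blowup} of an integer sequence and of \emph{embeddable}.

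The case $\ell=1$ is the easy one: here $\rho(d)=(c_1)$ with $c_1\geq 3$, and the target is the graph on the right of Figure~\ref{f:hyp_bundles2}, namely two spheres of self-intersections $+1$ and $-c_1+2$ meeting at two points. I would obtain this from the basic configuration $(\ell\mathcal{C}_2)$ — a line $\ell$ (self-intersection $+1$) and a conic $\mathcal{C}$ (self-intersection $+4$) meeting at two points — by blowing up $c_1-2\geq 1$ times, each time at a generic point of the conic away from $\ell$; this drops the conic's self-intersection from $+4$ to $+4-(c_1-2)=-c_1+6$... so in fact I must blow up $c_1+2$ times at generic points of $\mathcal C$, leaving the line untouched, to reach self-intersection $-c_1+2$ while keeping the two transverse intersections with $\ell$. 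The blowups happen in a blowup of $\CP$ with as many exceptional curves as points blown up, and the proper transform $\ell\cup\mathcal C$ is the desired divisor; one checks the graph is exactly the one on the right of Figure~\ref{f:hyp_bundles2}.

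For the embeddable case I would argue by induction on the number of blowups needed to pass from $(0,0)$ to a sequence $s$ with $s\prec\rho(d)$. The base case $s=(0,0)$, i.e. $\rho(d)$ dominating $(0,0)$ coordinatewise with $\ell=2$, is handled by the configuration $(3\ell)$ of three generic lines, which already realizes a circular chain whose self-intersection data can be pushed down to match $\rho(d)$ by blowing up at generic points on the appropriate lines (as in Lemma~\ref{l:ell_sd}, but now in the circular rather than elliptic pattern of Figure~\ref{f:hyp_bundles2}, left). For the inductive step I would show that each blowup operation on the sequence — replacing $(\dots,s_i,s_{i+1},\dots)$ by $(\dots,s_i+1,1,s_{i+1}+1,\dots)$ — corresponds geometrically to blowing up the ambient 4--manifold at the intersection point of the $i$-th and $(i+1)$-st curves of the chain: this creates a new $+1$-sphere (the exceptional divisor) inserted between them, raises each of their self-intersections by the needed amount relative to the target, and preserves the circular plumbing shape. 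Passing from a divisor realizing $s$ to one realizing $\rho(d)$ when merely $s\prec\rho(d)$ is then done by extra generic blowups on single curves, each lowering one self-intersection by $1$ without altering the intersection pattern. Putting the two displays of Figure~\ref{f:hyp_bundles2} side by side with the sequences shows the dual intersection graphs agree.

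The main obstacle I anticipate is not any single blowup but getting the correspondence between the combinatorial \emph{blowup} operation on $(0,0)$ and the geometric blowups exactly right, including the off-by-the-edge effects at the two ends of the chain where the circular identification $c_\ell\!-\!c_1$ edge lives, and making sure the ``$+1$'' at the distinguished vertex in Figure~\ref{f:hyp_bundles2} (left) is produced and not consumed by later blowups. I would therefore set up the induction so that the distinguished $+1$-vertex is always one of the original lines of $(3\ell)$, never blown up again, which keeps its self-intersection fixed at $+1$ throughout; the remaining vertices then absorb all the generic and intersection-point blowups, and a direct comparison of self-intersection sequences with $\rho(d)$ (using $\sum(\text{target self-ints})$ against the number of blowups as a consistency check) finishes the verification.
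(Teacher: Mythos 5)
Your proposal is correct and follows essentially the same route as the paper: the $\ell=1$ case is realized by blowing up the basic configuration $(\ell\mathcal{C}_2)$ at $c_1+2$ generic points of the conic, and the embeddable case by keeping one line of $(3\ell)$ as the untouched $+1$-vertex, realizing each combinatorial blowup of the sequence as a geometric blowup at the corresponding nodal point of the chain away from that line, and finishing with $c_i-s_i$ generic blowups on each remaining component. The only thing to tidy up is the sign slip in your inductive step: the exceptional sphere has self-intersection $-1$ (its sequence entry is $1$ because the weights are $-s_i$), and blowing up at a node \emph{lowers} the self-intersections of the two curves through it, which is exactly what raising the corresponding sequence entries by $1$ records.
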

 
\begin{proof} 
When $\ell=1$ the graph on the right of Figure~\ref{f:hyp_bundles2} is dual to a spherical complex divisor  
$D\subset\CP\#(c_1+2)\CPb$ consisting of the proper transforms of the line and the conic 
of the basic configuration $(\ell\mathcal{C}_2)$. This is easily shown as in the proof of Lemma~\ref{l:par_sd}. 

When $d$ is embeddable the graph on the left of Figure~\ref{f:hyp_bundles2} is dual to a 
spherical complex divisor inside a blowup of $\CP$ consisting of the proper transforms 
of the three lines of the basic configuration $(3\ell)$. In order to see this, consider 
the configuration $D_0 = (3\ell)$ of three lines in general position in $\CP$, and 
let $\ell$ be one of the lines. The line $\ell$ 
will correspond to the sphere with self-intersection $+1$ in the final divisor. Associate the string 
$s^0 = (0,0)$ to $D_0$ and define inductively configurations $D_k$, $k\geq 0$, as follows. By assumption  
there is a sequence of blowups $s^0 \leadsto s^1 \leadsto \dots \leadsto s^n = s$.  For each $k=0,\ldots, n-1$ 
the blowup $s^k\leadsto s^{k+1}$ determines in a natural way a symplectic blowup at a nodal point of 
$D_k$ not lying on $\ell$. We define $D_{k+1}$ as the 
total transform of $D_k$. The self-intersection of the $i$--th sphere in $D_k$ is $-s^k_i$, 
except for $i = 1$ and $i=k+1$, in which case the self intersection is $-s^k_i+1$. 
Finally, for each $i$ we blow up $c_i-s_i$ times at generic 
points of the $i$--th component of $D_n$ and take the resulting proper transform.
\end{proof} 

\subsection*{Existence of the symplectic caps}
We are now ready to apply Theorem~\ref{t:limak} in order to establish the following theorem, which is the 
main result of this section. 

\begin{thm}\label{t:caps}
Let $Y$ be a torus bundle over $S^1$.  Then, $Y$ admits a symplectic cap $W_Y$ which is a 
closed regular neighborhood of a spherical complex divisor $D$ in a deformation of a 
blowup of $\CP$ with its standard K\"ahler form, if one of the following conditions is verified: 
\begin{enumerate} 
\item \label{caps-item1}
$Y$ is elliptic and $Y=\T_{A(\ep)}$, with $\ep\in\{-1,0,1\}$; in this case $D$ has intersection graph dual 
to the graph on the left of Figure~\ref{f:ell_bundles};
\item \label{caps-item2}
$Y$ is elliptic and $Y=\T_{-A(\ep)}$, with $\ep\in\{-1,0,1\}$; in this case $D$ has intersection graph dual 
to the graph on the right of Figure~\ref{f:ell_bundles};
\item \label{caps-item3}
$Y$ is parabolic and $Y = -\T_{A(0,-n)}$ 
with $n\leq 4$; in this case $D$ has intersection graph dual to the graph of Figure~\ref{f:par_bundles};
\item \label{caps-item4}
$Y$ is hyperbolic and $Y=-\T_{A(-c)}$ with $c\geq 3$; in this case $D$ has intersection graph dual the graph 
on the right of Figure~\ref{f:hyp_bundles2};
\item \label{caps-item5}
$Y$ is hyperbolic $Y= \T_{-A(d)}$ with $d$ embeddable; in this case $D$ has intersection graph dual to 
the graph on the left of Figure~\ref{f:hyp_bundles2}.
\end{enumerate} 
Moreover, for each of the bundles $Y$ specified above we have $b_1(Y)=1$, and the contact 3--manifold $(Y,\xi_Y)$ admits a Stein 
filling diffeomorphic to the complement of a regular neighborhood of the corresponding spherical symplectic 
divisor $D$ constructed in one of Lemmas~\ref{l:ell_sd},~\ref{l:par_sd} or~\ref{l:hyp_sd}.
\end{thm}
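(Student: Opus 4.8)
The plan is to prove Theorem~\ref{t:caps} by combining the three divisor constructions of Lemmas~\ref{l:ell_sd},~\ref{l:par_sd} and~\ref{l:hyp_sd} with the deformation result of Li--Mak (Theorem~\ref{t:limak}), and then to identify the resulting contact boundary with $(Y,\xi_Y)$. First I would observe that each of Lemmas~\ref{l:ell_sd},~\ref{l:par_sd},~\ref{l:hyp_sd} produces a spherical complex divisor $D$ inside some blowup $X$ of $\CP$ with its standard K\"ahler form, whose dual intersection graph is the relevant plumbing graph of Figure~\ref{f:ell_bundles}, Figure~\ref{f:par_bundles} or Figure~\ref{f:hyp_bundles2}. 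By the plumbing-calculus facts recalled from Neumann~\cite{Ne81} in the paragraphs preceding these lemmas, the oriented boundary of a regular neighbourhood of $D$ is precisely $-Y$ for the corresponding bundle $Y$ listed in cases \eqref{caps-item1}--\eqref{caps-item5}. This already gives the diffeomorphism-type statement $\del W_Y = -Y$ once we set $W_Y$ to be such a neighbourhood.

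Next I would verify the two hypotheses of Theorem~\ref{t:limak} for $D\subset(X,\om_0)$, where $\om_0$ is the K\"ahler form. The intersection form of $D$ must fail to be negative definite: in every one of the five graphs there is a vertex of non-negative weight (weight $0$ or $+1$ in Figures~\ref{f:ell_bundles} and~\ref{f:par_bundles}, and a $+1$-vertex in both graphs of Figure~\ref{f:hyp_bundles2}), so the corresponding diagonal entry of the intersection matrix is $\geq 0$ and the form is not negative definite. The exactness hypothesis --- that $\om_0$ restricted to the boundary of a closed regular neighbourhood $N$ of $D$ be exact --- holds because $H^1(\del N;\R)$ surjects onto... more precisely, since $D$ carries a sphere of non-negative self-intersection one checks that $[\om_0]$ pairs trivially with $H_2(\del N;\R)$; concretely $H_2(\del N)$ is generated by the torus fibre together with classes supported in $D$, and $\om_0$ evaluates to $0$ on the fibre and is Poincar\'e-dual in $N$ to a combination of the $C_i$, whose restriction to $\del N$ is nullhomologous. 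Hence $\om_0|_{\del N}$ is exact. Applying Theorem~\ref{t:limak} yields a family $\om_t$ keeping $D$ symplectic and, for any neighbourhood $N$ of $D$, an $\om_1$-concave neighbourhood of $D$ inside $N$; taking $W_Y$ to be a closed such neighbourhood gives a compact symplectic $4$--manifold with strictly $\om_1$-concave boundary, sitting inside the deformed blowup $(X,\om_1)$ of $\CP$. This establishes the existence of the symplectic cap with all the asserted properties, and $b_1(Y)=1$ is immediate since $Y$ is a torus bundle over $S^1$ (the fibre inclusion shows $H_1(Y;\Q)\cong\Q\oplus\ker(A_*-\id)$ has a $\Q$-summand from the base, and for the bundles in $\F$ the monodromy has no eigenvalue $1$, so $b_1(Y)=1$).

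For the final sentence, let $\xi_Y$ be the positive contact structure on $Y$ induced (after reversing orientation) by the $\om_1$-concave structure on $\del W_Y = -Y$. Then $X\setminus\Int(W_Y)$ is a compact symplectic $4$--manifold whose boundary is $(Y,\xi_Y)$ as a convex (strongly fillable) boundary; I would promote this to a Stein filling by noting that $X$ is a blowup of $\CP$, hence a rational surface, and that the complement of a regular neighbourhood of a configuration of complex curves in a K\"ahler surface can be arranged to be Stein --- here one uses that after the Li--Mak deformation the divisor $D$ has a strictly pseudoconcave neighbourhood, equivalently its complement has a strictly pseudoconvex (Stein) structure with $(Y,\xi_Y)$ as its contact boundary; diffeomorphically this complement is the complement of the regular neighbourhood of the corresponding holomorphic divisor $D$ built in Lemmas~\ref{l:ell_sd},~\ref{l:par_sd},~\ref{l:hyp_sd}.

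The main obstacle will be the exactness hypothesis in Theorem~\ref{t:limak}: one must genuinely check, graph by graph (or by a uniform argument using the presence of a component with self-intersection in $\{0,+1\}$ meeting the rest of $D$ appropriately), that the K\"ahler class restricts to zero on $H_2$ of the boundary of a neighbourhood of $D$. This is where the hypotheses $n\leq 4$, $c\geq 3$, and ``$d$ embeddable'' are really being used --- they guarantee the configuration actually embeds in a blowup of $\CP$ with the required non-negative component --- and it is the step that requires care rather than formal manipulation. The identification of the contact structure on the boundary as \emph{the} positive contact structure $\xi_Y$ (as opposed to some other tight structure) is definitional here, so no extra work is needed for that in this section; its finer identification is deferred to Section~\ref{s:contstr}.
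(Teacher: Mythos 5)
Your overall strategy coincides with the paper's: take the divisors from Lemmas~\ref{l:ell_sd},~\ref{l:par_sd},~\ref{l:hyp_sd}, verify the two hypotheses of Theorem~\ref{t:limak}, obtain an $\om_1$--concave neighbourhood as the cap, and realise the complement as a Stein filling. The non-negative-definiteness check and the $b_1(Y)=1$ computation are fine and match the paper. However, there are two places where your argument has a genuine gap rather than just compressed exposition.

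First, the exactness of $\om_0|_{\del N}$. You correctly reduce this to showing that $[\om_0]$ pairs to zero with the fibre class $[F]$, which generates $H_2(\del N;\R)\cong\R$, but your stated justification (``$\om_0$\dots is Poincar\'e-dual in $N$ to a combination of the $C_i$, whose restriction to $\del N$ is nullhomologous'') does not actually prove anything: in a blowup of $\CP$ the K\"ahler class is an ample class, and there is no a priori reason it should vanish on $i_*[F]$ unless $i_*[F]=0$ in $H_2(X;\R)$. That vanishing is the real content, and the paper proves it by a uniform homological argument: Mayer--Vietoris gives $H_1(\T_A;\Z)\cong\Z\oplus\coker(A-I)$, the long exact sequence of $(W,\T_A)$ gives $H_1(\T_A;\Z)\cong\Z\oplus\coker(Q_\Ga)$, and since $\trace(A)\neq 2$ in every case under consideration the group $\coker(A-I)$ is torsion, so $Q_\Ga$ is nonsingular, the map $H_2(\T_A;\Z)\to H_2(W;\Z)$ is zero, and hence $i_*[F]=0$. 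Note also that you misplace where the hypotheses $n\leq 4$, $c\geq 3$ and embeddability enter: they are used only in the three lemmas to produce the divisors inside a blowup of $\CP$; the exactness argument itself is uniform and needs only $\trace(A)\neq 2$ (which is why the parabolic family is taken with monodromy of trace $-2$).

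Second, the passage from ``the complement of the concave neighbourhood is strictly pseudoconvex'' to ``it is a Stein filling'' is not automatic: a compact strictly pseudoconvex complex surface with boundary need not itself be Stein. The paper first uses a Moser-type argument on the compact manifold $X$ to produce a diffeomorphism $\phi$ with $\phi^*\om_1=\om_0$, transports the integrable complex structure $J_0$ to a $J_1$ compatible with $\om_1$, and then invokes Bogomolov--de~Oliveira (\cite{BdO97}, Theorem~2$'$) to deform the resulting strictly pseudoconvex surface to a genuine Stein filling of $(Y,\xi_Y)$. Your phrase ``can be arranged to be Stein'' is exactly the step that needs this citation; as written it is an assertion, not an argument.
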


\begin{proof}
We would like to apply Theorem~\ref{t:limak} to the complex divisors $D$ appearing in Lemmas~\ref{l:ell_sd},~\ref{l:par_sd} and~\ref{l:hyp_sd}. Recall that $D$ is contained in a blowup $X$ of the standard K\"ahler $\CP$.
It is easy to check using e.g.~the statement of~\cite{Ne81}*{Proposition~2.1} 
that $-\T_{A(\ep)} = \T_{-A(-\ep)}$ for each $\ep\in\{-1,0,+1\}$. Thus, in view of the three lemmas 
and the discussions preceding them, to apply Theorem~\ref{t:limak} it suffices to show that (i) the restriction of the K\"ahler form 
$\om_0$ to the boundary of a closed regular neighborhood of $D$ is exact and (ii) for each graph $\Ga$ mentioned in the statement  the corresponding intersection matrix $Q_\Ga$ is not negative definite. 
Viewing $\T_A$ as the union of two copies of a 2--torus times an interval and applying Mayer--Vietoris yields the 
exact sequence
\[
\cdots\lra \Z^2\oplus\Z^2
\stackrel{\left(\begin{smallmatrix} I & I\\A & I\end{smallmatrix}\right)}{\lra}
\Z^2\oplus\Z^2
\lra
H_1(\T_A;\Z)\lra \Z\lra 0.
\]
This immediately implies: 
\[
H_1(\T_A;\Z) \cong \Z\oplus \coker(A-I).
\]
Since $A\in SL_2(\Z)$, $A-I$ can be singular only if $A$ is parabolic with $\trace(A)=2$. But we are 
considering only parabolic bundles of the form 
$\T_{A(0,-n)}$, and $\trace(A(0,-n)) = \trace(-\left(\begin{smallmatrix}1&n\\0&1\end{smallmatrix}\right))=-2$, therefore in all our 
cases $\coker(A-I)$ is a torsion group. This shows that $b_1(Y)=1$ for each torus bundle given in the statement. Let $W$ be a closed regular neighborhood of the divisor 
corresponding to $\T_A$ and given by one of Lemmas~\ref{l:ell_sd},~\ref{l:par_sd} and~\ref{l:hyp_sd}. 
By construction we have $\del W= -\T_A$, and  the homology exact sequence of the pair $(W, \T_A)$ 
contains the exact sequence 
\[
\cdots \lra H_2(\T_A;\Z)\lra H_2(W;\Z)\stackrel{Q_\Ga}{\lra} H_2(W,\T_A;\Z)\lra H_1(\T_A;\Z)\lra H_1(W;\Z) \lra 0.
\]
From this sequence we deduce 
\[
H_1(\T_A;\Z) \cong \Z\oplus \coker(Q_\Ga), 
\]
and therefore $\coker(Q_\Ga)\cong\coker(A-I)$. Since $\coker(A-I)$ is a torsion group we conclude that 
$Q_\Ga$ is nonsingular, hence the map $H_2(\T_A;\Z)\to H_2(W;\Z)$ vanishes. This implies that if $[F]\in H_2(\T_A;\R)$ is the class carried by a torus fiber of the fibration $\T_A\to S^1$ and $i_* : H_2(\T_A;\R)\to H_2(X;\R)$ is the map induced by inclusion, we have $i_*([F])=0$. Therefore, when we evaluate on $[F]$ the restriction of $[\om_0]\in H^2(X;\R)$ to $H^2(\T_A;\R)$ we get
\[
\langle i^*[\om_0],[F]\rangle =  \langle [\om_0], i_*[F]\rangle = 0.
\]
Since $H_2(\T_A;\R)$ is generated by $[F]$ we conclude $i^*([\om_0])=0$, i.e.~the restriction of $\om_0$ to $\T_A$ is exact. Finally, $Q_\Ga$ is never negative definite, as one can easily check by looking at the corresponding intersection graph $\Ga$. 
We can therefore apply Theorem~\ref{t:limak} as explained at the beginning. Theorem~\ref{t:limak} implies that 
there is a one-parameter family of symplectic forms on $X$ which interpolates between the K\"ahler form $\om_0$ 
and a symplectic form $\om_1$, with the property that any neighborhood of $D$ contains an $\om_1$--concave neighborhood. 
Since $X$ is compact, a Moser-type argument produces a diffeomorphism $\phi: X \to X$ such that $\phi^*\om_1 = \om_0$. 
Pushing forward via $\phi$ the integrable complex structure $J_0$ compatible with $\om_0$ yields an integrable complex 
structure $J_1$ compatible with $\om_1$.  Setting $Y=\T_A$, we obtain a symplectic cap $W_Y$ from any $\om_1$--concave 
neighborhood of $D$. Moreover, the complement $X'$ in $X$ of the interior of $W_Y$, endowed with the complex structure $J_1$, 
is a strictly pseudo-convex surface  in the sense of~\cite{BdO97}. By~\cite{BdO97}*{Theorem~2'} there is a small deformation of $X'$ which is a Stein filling of $(Y,\xi_Y)$. This concludes the proof. 
\end{proof}

\section{Fillings}\label{s:fillings}

In this section we prove Theorem~\ref{t:main}. The theorem will follow combining 
Theorems~\ref{t:ell1-hyp},~\ref{t:ell2} and~\ref{t:par} below. At the end of the section we prove 
Proposition~\ref{p:distfill}, which shows that the family $\{(Y,\xi_Y)\ |\ Y\in\F\}$ contains infinitely many contact hyperbolic torus bundles admitting non-homotopy equivalent Stein fillings with even intersection forms and the same Betti numbers.

\begin{thm}\label{t:ell1-hyp}
Let $Y$ be a torus bundle over $S^1$ such that one of the following holds: 
\begin{enumerate} 
\item \label{ell1-hyp-item1}
$Y$ is elliptic and $Y=\T_{A(\ep)}$ with $\ep\in\{-1,0,+1\}$; 
\item \label{ell1-hyp-item2}
$Y$ is hyperbolic and $Y=-\T_{A(-c)}$ with $c\geq 3$; 
\item \label{ell1-hyp-item3}
$Y$ is hyperbolic and $Y = \T_{-A(d)}$ with $d$ embeddable.
\end{enumerate} 
Then, 
\begin{itemize}
\item 
each minimal, strongly convex symplectic filling of $(Y,\xi_Y)$ has vanishing first Chern class and first and third Betti numbers;
\item 
in Cases~\ref{ell1-hyp-item1} and~\ref{ell1-hyp-item2} the contact 3--manifold $(Y,\xi_Y)$ admits a unique 
minimal, strongly convex symplectic filling up to diffeomorphisms;
\item 
in Case~\ref{ell1-hyp-item3} all the minimal, strongly convex symplectic fillings of $(Y,\xi_Y)$ share 
the same second Betti number and fall into finitely many diffeomorphism classes.
\end{itemize}
\end{thm}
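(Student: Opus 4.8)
The plan is to exploit the symplectic cap $W_Y$ from Theorem~\ref{t:caps}: glue any minimal strongly convex symplectic filling $(X,\omega)$ of $(Y,\xi_Y)$ to $W_Y$ along $Y$ to obtain a closed symplectic $4$--manifold $Z = X\cup_Y W_Y$ containing the spherical symplectic divisor $D$. Because $W_Y$ sits inside a (deformation of a) blowup of $\CP$, the divisor $D$ contains a sphere of self-intersection $0$ or $+1$ in cases where that is available, and in every case $D$ contains a sphere of nonnegative self-intersection (read off from Figures~\ref{f:ell_bundles},~\ref{f:par_bundles},~\ref{f:hyp_bundles2}). By McDuff's theorem on symplectic $4$--manifolds containing such spheres (together with Taubes--Seiberg--Witten theory, via work of Li--Liu and Lalonde--McDuff), $Z$ is a symplectic rational or ruled surface; since $b_1(Y)=1$ and $X$ has connected boundary, one computes $b_1(Z)$, and rationality then forces $b_1(X)=0$, $b_3(X)=0$, and $c_1(X)=0$ (the last from $c_1$ of a rational surface restricted to the complement of $D$, using that the canonical class is supported on $D$). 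First I would make these homological computations precise, tracking the Mayer--Vietoris and long-exact-sequence bookkeeping already rehearsed in the proof of Theorem~\ref{t:caps}.

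\textbf{Uniqueness and finiteness.}
Once $Z$ is identified as a blowup of $\CP$ or of a ruled surface, the filling $X$ is the complement of a regular neighbourhood of $D$ in $Z$. The key point is then a \emph{uniqueness up to deformation/symplectomorphism} statement for the pair $(Z,D)$: the symplectic divisor $D$ is spherical and its components have prescribed self-intersections and intersection pattern, so one can run a sequence of symplectic blow-downs guided by the combinatorics of the plumbing graph (reversing the blow-up recipe in Lemmas~\ref{l:ell_sd},~\ref{l:par_sd},~\ref{l:hyp_sd}) to reduce $(Z,D)$ to one of the two basic configurations $(3\ell)$ or $(\ell\mathcal C_2)$ inside $\CP$. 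Each individual blow-down is controlled because an exceptional sphere meeting $D$ in the expected way can be found using the light-cone lemma / McDuff's classification of exceptional classes; the number of blow-downs needed is bounded by the length of the graph. In Cases~\ref{ell1-hyp-item1} and~\ref{ell1-hyp-item2} the graph admits essentially one blow-down sequence, so $(Z,D)$ — hence $X$ — is determined up to diffeomorphism. In Case~\ref{ell1-hyp-item3} the embeddability hypothesis still bounds the total homology (the Euler characteristic and signature of $X$ are pinned down by $Q_\Gamma$ and by $Z$ being rational/ruled), giving a fixed $b_2(X)$; the finitely-many-diffeomorphism-classes conclusion then follows because there are only finitely many ways, up to the symplectic mapping class group action, to choose the exceptional spheres at each stage — equivalently, finitely many smooth isotopy classes of the configuration $D$ in a fixed blowup of $\CP$.

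\textbf{Main obstacle.}
The hard part will be the uniqueness/rigidity of the embedded divisor $D\subset Z$ — i.e.\ showing that two minimal strongly convex fillings give rise to \emph{deformation-equivalent} (or at least diffeomorphic) pairs $(Z,D)$. Blowing down is delicate when $D$ has components of self-intersection $\le -2$, since an exceptional sphere need not be disjoint from $D$ and need not meet it transversally in a single point with the right component; one must argue that it can be made to do so after a symplectic isotopy, or else choose a different exceptional class, using positivity of intersections and the structure of the set of exceptional classes in a rational/ruled surface. Controlling this at every step of a possibly long blow-down sequence, uniformly in the filling $X$, is where the real work lies; the homological statements in the first bullet, by contrast, are essentially formal once $Z$ is known to be rational or ruled.
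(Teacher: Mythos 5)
Your overall strategy---glue the filling to the cap $W_Y$ of Theorem~\ref{t:caps}, identify the resulting closed symplectic manifold, and control the divisor by blowing down to a basic configuration---is the same as the paper's, and your bookkeeping for $b_1$ and $b_3$ is fine. But the step you yourself flag as ``where the real work lies'' is a genuine gap rather than a deferred technicality, and the vanishing of $c_1$ is not ``essentially formal'' as you claim: it requires knowing that the class of (the total transform of) $D$ is Poincar\'e dual to $c_1$ of the ambient rational surface, which is precisely the output of the blow-down analysis you have not supplied. Concretely, the paper fills the gap as follows. First, in every case of this theorem the divisor contains a $+1$-sphere $S$, so \cite{McD90} gives directly that the glued-up manifold is a blowup of $\CP$ with $S$ the hyperplane class; the rational-or-ruled dichotomy you invoke is not needed here (it is needed only in Theorem~\ref{t:ell2}, where the cap has no $+1$-sphere). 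Second, in Cases~\ref{ell1-hyp-item1} and~\ref{ell1-hyp-item3} one first blows up the cap once more at a node of $D$ away from $S$, so that the total transform $\widetilde D$ has at least four components; one then deletes one of the two spheres $S'$ adjacent to $S$, obtaining a linear chain beginning with a $+1$-sphere to which \cite{Li08}*{Theorem 4.2} applies. That theorem is what actually controls the blow-down: it produces a sequence of blow-downs to $\CP$ under which the chain descends to two lines, with each exceptional sphere either contained in the configuration or meeting it positively in a single point (this is where minimality of the filling enters), and it determines the homology classes of the chain's components up to finitely many possibilities. The deleted sphere $S'$ descends to a line, Gromov's uniqueness~\cite{Gr85} of configurations of lines (resp.\ of a line and a conic in Case~\ref{ell1-hyp-item2}) standardizes the blown-down configuration to $(3\ell)$ (resp.\ $(\ell{\mathcal C}_2)$), and the preliminary extra blow-up guarantees that the two spheres adjacent to $S$ are disjoint, which pins down their classes. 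Only at this point does one know that $c_1$ of the ambient manifold is Poincar\'e dual to $[\widetilde D]$, hence that $c_1(P)=0$ and that $b_2(P)$ is determined; the uniqueness (Cases~\ref{ell1-hyp-item1} and~\ref{ell1-hyp-item2}) or finiteness (Case~\ref{ell1-hyp-item3}) of the admissible blow-up sequences then gives the remaining bullets. Without a substitute for \cite{Li08}*{Theorem 4.2}, your appeal to ``the light-cone lemma / McDuff's classification of exceptional classes'' does not by itself rule out exceptional spheres meeting the negative components of $D$ badly, which is exactly the failure mode you identify.
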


\begin{proof} 
Cases~\ref{ell1-hyp-item1}, \ref{ell1-hyp-item2} and~\ref{ell1-hyp-item3} of the statement correspond respectively to Cases~\ref{caps-item1}, \ref{caps-item4} and~\ref{caps-item5} of  
Theorem~\ref{t:caps}. In all cases the dual intersection graph of the symplectic divisor $D\subset W_Y$ 
contains at least three vertices, one of which has weight $+1$. This latter vertex 
corresponds to an embedded symplectic sphere $S\subset W_Y$ with self-intersection $+1$. 

We first deal with Cases~\ref{ell1-hyp-item1} and~\ref{ell1-hyp-item3}. 
If we blow up symplectically $W_Y$ at a nodal point of 
$D$ away from $S$, the total transform $\widetilde D \subset\widehat W_Y := W_Y\#\CPb$ contains at 
least four spheres. Notice that the boundary of the symplectic cap $\widehat W_Y$ is still 
strongly $\omega$--concave and the contact structure induced on the boundary is still $\xi_Y$. 
Let $P$ be a minimal, strongly convex symplectic filling of $(Y,\xi_Y)$. 
Since the boundary of $\widehat W_Y$ is strongly $\omega$--concave, we can construct a closed 
symplectic 4--manifold $(X,\omega)$ by symplectically gluing the cap $\widehat W_Y$ and $P$ together 
along $Y$ after possibly rescaling the symplectic form on one of the two pieces. 
Since $\widetilde D\subset \widehat W_Y$, $X$ contains an embedded 
symplectic $+1$-sphere $S$. Hence, by~\cite{McD90}*{Theorem~1.1 and Corollary~1.6} $X$ is symplectomorphic 
to a symplectic blowup of $\CP$ endowed with the standard K\"ahler form, in such a way that $S$ represents the hyperplane class. Since $b_1(Y)=b_1(\widehat W_Y)=1$ and $b_1(X)=b_3(\widehat W_Y) = 0$, the Mayer--Vietoris exact sequence of homology groups associated to the decomposition $X=\widehat W_Y\cup P$ shows that $b_1(P)=b_3(P)=0$.
We can choose an $\om$--tame almost complex structure $J$ on $X$ which makes all the 
symplectic spheres in $\widetilde D$ pseudo-holomorphic. Let $S'\subset\widetilde D$ be one of the two symplectic 
spheres intersecting $S$. By construction $\widetilde D' :=\widetilde D\setminus S'$ 
consists of a chain of $k$ symplectic spheres for some $k\ge 3$; their self-intersection numbers are $(1,1-b_1,-b_2,\dots,-b_k)$.
By~\cite{Li08}*{Theorem 4.2} there is a sequence of symplectic blowdowns of $X$ to $\CP$ such that $\widetilde D'$ blows down to 
the union of two lines $\ell\cup\ell'\subset\CP$. Moreover, at each step the almost complex structure descends, 
and, since $P$ is minimal, the exceptional divisor that we blow down either intersects the configuration positively once or belongs 
to the configuration. During this process the sphere $S'$ blows down to a smoothly embedded symplectic sphere 
intersecting both $\ell$ and $\ell'$ exactly once, hence $S'$ blows down to a line. 
It follows that $\widetilde D$ blows down to a generic configuration $C$ of three generically embedded symplectic 
spheres which are pseudo-holomorphic with respect to an almost complex structure tamed by the standard K\"ahler 
form on $\CP$. By a theorem of Gromov~\cite{Gr85} (see also~\cite{St15}*{Lemma 2.7}) the embedding of 
such three symplectic spheres in general position is unique up to isotopy. Therefore, up to isotopy we may assume 
that $C$ coincides with a basic configuration $(3\ell)$ of three complex lines. This means that the configuration 
$\widetilde D$ is obtained from $(3\ell)$ via a sequence of blowups. Since the homology class carried by 
the divisor $(3\ell)$ is Poincar\'e dual to  $c_1(\CP)$, we conclude that $c_1(X)$ is Poincar\'e dual to $[\widetilde D]$. 
In particular, $c_1(P)=0$, the total number of blowups must be $N=9-[\widetilde D]^2$, and the second Betti number of $P$ 
is determined to be $b_2(P)=N+1-b_2(W_Y)$. 
The homology classes carried by the symplectic spheres comprising $\widetilde D$ are determined as 
in~\cite{Li08}*{Theorem 4.2}, up to a little proviso: 
one needs to pay attention to the way $S'$ intersects the other sphere $S''$ which intersects $S$ nontrivially. 
Since we made sure that $\widetilde D$ contains at least four spheres, $S'$ and $S''$ intersect trivially. 
This implies that if we denote by $h$ the hyperplane class and by $e_i$ the classes of the exceptional divisors, 
since both $[S']$ and $[S'']$ are of the form $h+\sum c_i e_i$ and there 
exists exactly one index $i$ such that the coefficient $c_i$ in both expressions is nonvanishing (and equal to $-1$). 
Indeed, one can check that the exceptional 
divisor corresponding to $e_i$ comes from blowing up two lines of $(3\ell)$ at their intersection point, 
and that there is a divisor in $\widetilde D$ carrying a class $e_i-\sum_{j\neq i} x_je_j$ 
for some $x_j\ge 0$. But there are clearly finitely many possible sequences of blowups 
compatible with the above construction, and exactly one (up to reordering) in Case~\ref{ell1-hyp-item1}. It follows 
that the diffeomorphism type of the complement of a neighborhood 
of $\widetilde D\hookrightarrow X\cong\CP\#N\CPb$ is uniquely determined in Case~\ref{ell1-hyp-item1}, 
and determined up to finitely many possiblities in Case~\ref{ell1-hyp-item3}. 
This concludes the proof in Cases~\ref{ell1-hyp-item1} and~\ref{ell1-hyp-item3}. 

The proof in Case~\ref{ell1-hyp-item2} is quite similar, so we just outline the differences with the previous cases. 
In this case 
we do not blow up $W_Y$ at the beginning, so we consider directly the closed symplectic 4--manifold 
$X = W_Y \cup P$, where $P$ is a minimal, strongly convex symplectic filling. By the same argument as above, 
$X$ is symplectomorphic to a blowup of $\CP$ and $b_1(P)=b_3(P)=0$. The symplectic divisor $D$ is a union of smoothly embedded 
symplectic spheres $S$ and $S'$, where $S\cdot S=+1$, $S'\cdot S' = -c_1+2$ with $c_1\geq 3$ and $[S]=h$, 
where $h$ is the hyperplane class of $X$. Moreover, the adjunction formula for $S'$ and the fact that $S\cdot S'=2$ imply $[S']=2h-\sum_i e_i$, 
where the classes $e_i$ are the exceptional classes. As before, this implies that $D$ blows down to a 
configuration of two symplectic spheres in $\CP$, one representing $h$ and the other $2h$. But the moduli 
space of smoothly embedded symplectic curves in the class $2h$ in $\CP$ is connected and each pair of points 
determines a unique pseudo-holomorphic line~\cite{Gr85}, hence up to isotopy we may assume that $D$ blows down to a  
basic configuration $(\ell\mathcal{C}_2)$. Since there is clearly a unique way (up to reordering) to blow up 
$(\ell\mathcal{C}_2)$ to get $D$, the diffeomorphism type of $P$ is uniquely determined. Since the homology class carried by $(\ell\mathcal{C}_2)$ is Poincar\'e dual to $c_1(\CP)$, we conclude as in Cases~\ref{ell1-hyp-item1} and~\ref{ell1-hyp-item3} that $c_1(X)$ is Poincar\'e dual to $[D]$ and $c_1(P)=0$.
\end{proof}

\begin{thm}\label{t:ell2}
Let $Y$ be an elliptic torus bundle over $S^1$ of the form $Y=\T_{-A(\ep)}$, with $\ep\in\{-1,0,1\}$. 
Then, all Stein fillings of $(Y,\xi_Y)$ have vanishing first Chern class and first Betti number, and they are pairwise orientation-preserving diffeomorphic.
\end{thm}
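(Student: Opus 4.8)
The plan is to run the same strategy as in Theorem~\ref{t:ell1-hyp}, but starting from the symplectic cap $W_Y$ whose divisor $D$ is dual to the graph on the \emph{right} of Figure~\ref{f:ell_bundles} (Case~\eqref{caps-item2} of Theorem~\ref{t:caps}), which has only two vertices and no $+1$-weighted vertex --- so the ``embedded symplectic $+1$-sphere'' trick is not directly available. Instead I would first pass to a Stein filling $P$ of $(Y,\xi_Y)$, form the closed symplectic $4$-manifold $X = W_Y \cup P$ by gluing along $Y$ (after rescaling), and then use the embedding of $W_Y$ into a blowup of the K\"ahler $\CP$ provided by Theorem~\ref{t:caps}: this exhibits $X$ itself as a (deformation of a) blowup of $\CP$, because $X$ is obtained from that blowup by replacing the Stein domain $X'$ (the complement of $W_Y$) with the Stein filling $P$, and two Stein fillings of the same contact $3$-manifold can be glued to the same cap. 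Concretely, since one of the spheres in the basic configuration $(\ell\mathcal{C}_2)$ used to build $D$ is the line $\ell$ with $[\ell]=h$, after the sequence of blowups producing $D$ the component $S\subset D$ coming from $\ell$ still carries a class $h + \sum c_i e_i$ with $S\cdot S$ possibly negative; but in the \emph{closed} manifold $X=\CP\#N\CPb$ obtained from gluing, one checks that this $S$ can be complemented, inside the $J$-holomorphic configuration $D$, so that McDuff's theorem still applies. Alternatively, and more cleanly: blow up $X$ at suitable points (on the exceptional divisors that were introduced to build $D$) to recover a genuine $+1$-sphere in the total transform, reducing to exactly the situation of Theorem~\ref{t:ell1-hyp}.

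Once $X$ is identified with a blowup of $\CP$ with $D$ pseudo-holomorphic for a tamed $J$, I would run the blow-down argument of~\cite{Li08}*{Theorem 4.2} on the chain $D$ (here a chain of two spheres, self-intersections $-1$ and $\ep-2$): by minimality of $P$ each exceptional sphere blown down either meets $D$ once transversally or is a component of $D$, so $D$ blows down to a basic configuration $(\ell\mathcal{C}_2)$ in $\CP$ --- a line together with a conic in general position. Gromov's theorem (\cite{Gr85}, as used in Theorems~\ref{t:ell1-hyp} and in~\cite{St15}*{Lemma 2.7}) gives uniqueness up to isotopy of such a line-conic configuration, so the number of blow-ups is $N = 9 - [D]^2$ and, since $[(\ell\mathcal{C}_2)]$ is Poincar\'e dual to $c_1(\CP)$, we get that $[D]$ is Poincar\'e dual to $c_1(X)$, hence $c_1(P)=0$; the Mayer--Vietoris sequence for $X = W_Y\cup P$ together with $b_1(X)=0$, $b_1(Y)=1=b_1(W_Y)$ forces $b_1(P)=0$ (and $b_3(P)=0$). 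For the diffeomorphism classification one then observes that, unlike the embeddable hyperbolic case, the homology classes of the two components of $D$ (one is $2h-\sum e_i$, up to the conic being blown up $6-\ep$ times and the line once, as in Lemma~\ref{l:ell_sd}) are rigidly determined, so there is a unique sequence of blow-ups (up to reordering of the exceptional spheres) turning $(\ell\mathcal{C}_2)$ into $D$; hence $P$, being diffeomorphic to the complement of a regular neighbourhood of a fixed configuration $D\subset\CP\#N\CPb$, is unique up to orientation-preserving diffeomorphism.

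The main obstacle I anticipate is the very first step: making McDuff's rationality/uniqueness theorem apply to $X$ when the cap $W_Y$ contributes \emph{no} symplectic sphere of square $\ge 0$, only two spheres of negative square (self-intersections $-1$ and $\ep-2\le -1$). In Theorem~\ref{t:ell1-hyp} a genuine $+1$-sphere was present from the start; here one must manufacture it, and the cleanest route is to blow up $X$ enough times at nodes of (the total transform of) $D$ away from the component $S$ coming from $\ell$, so that after enough blow-ups $S$ acquires self-intersection $+1$ --- but this requires that the component carrying $h + \sum c_ie_i$ really does descend, after undoing the blow-ups that built $D$, to the line $\ell$ with $[\ell]=h$, which is exactly what the construction of $D$ in Lemma~\ref{l:ell_sd}(2) guarantees (the line $\ell$ is blown up only once). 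So the argument should go through, with the bookkeeping of homology classes as in~\cite{Li08}*{Theorem 4.2} being the only tedious part.
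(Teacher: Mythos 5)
You have correctly located the crux --- the cap for $\T_{-A(\ep)}$ contains no embedded symplectic sphere of non-negative square, so McDuff's theorem does not apply directly --- but neither of your two workarounds closes the gap. The first (``$X$ is obtained from that blowup by replacing the Stein domain $X'$ with the Stein filling $P$, and two Stein fillings of the same contact $3$--manifold can be glued to the same cap'') is circular: one particular filling does glue to the cap to give a blowup of $\CP$, but the assertion that an \emph{arbitrary} Stein filling $P$ yields a rational $X=W_Y\cup P$ is exactly what must be proved. The second workaround is impossible as stated: blowing up (at nodes of $D$ or anywhere else) never increases the self-intersection of any component, so no amount of blowing up manufactures a $+1$-sphere from the two spheres of squares $-1$ and $\ep-2$; and you cannot instead blow \emph{down} toward a positive-square sphere without already knowing the global structure of $X$. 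Your bookkeeping of the class $h+\sum c_ie_i$ likewise presupposes that $X$ is a blowup of $\CP$ with the standard homology basis.

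The paper's route is genuinely different at this point. One blows down the $-1$-sphere $S_1$ of the cap \emph{inside the closed manifold} (legitimate whatever $P$ is, since $S_1$ is an exceptional symplectic sphere of $X$), turning $S_2$ into an immersed nodal symplectic sphere $S'_2$ of \emph{positive} square $2+\ep$ in $X'=W'_Y\cup P$. Smoothing the node yields an embedded torus of positive square, so the adjunction inequality forces $b_2^+(X')=1$; a Mayer--Vietoris argument gives $c_1(X')=PD([S'_2])$, hence $c_1(X')\cdot[\om]>0$, and \cite{Li96}*{Theorem~B} shows $X'$ is rational or ruled, the ruled case being excluded by a $c_1^2$ computation. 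Only then is $S'_2$ identified as the proper transform of a nodal cubic, and uniqueness of $P$ follows from the connectedness of the moduli space of nodal pseudo-holomorphic cubics \cite{Fr05}*{Theorem~13} --- not from the Gromov line-plus-conic statement, which concerns a different, reducible configuration. (The paper also obtains $c_1(P)=0$ independently of all this, from Honda's classification together with \cite{LM97}*{Theorem~1.2} applied to the conjugate Stein structure.) The missing idea in your plan is precisely this passage to an immersed positive-square sphere and the substitution of the rational-or-ruled theorem for McDuff's embedded-sphere criterion.
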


\begin{proof} 
Let $(P, J)$ be a Stein filling of $(Y,\xi_Y)$. We start by arguing that $c_1(P) = 0$. By Honda's 
classification~\cite{Ho00}, there is only one isotopy class of contact structures without Giroux torsion on 
an elliptic bundle $Y$. Since fillable contact structures have no Giroux torsion~\cite{Ga06}, both $\xi_Y$ 
and its conjugate $\overline{\xi_Y}$ belong to this isotopy class. Since $\overline{J}$ is another Stein 
structure on $P$ which fills $\overline{\xi_Y}$, applying~\cite{LM97}*{Theorem 1.2} we conclude 
$c_1(P) = 0$. 

The elliptic bundles $Y$ of type $\T_{-A(\ep)}$ are considered in Case~\ref{caps-item2} of Theorem~\ref{t:caps}, which 
says that $Y$ has a symplectic cap $W_Y$ and the corresponding divisor $D$ has intersection graph 
$\Ga$ dual to the graph on the right of Figure~\ref{f:ell_bundles}. Therefore, there are smoothly embedded symplectic 
spheres $S_1, S_2\subset W_Y$ with $S_1\cdot S_1 = -1$, $S_2\cdot S_2 = \ep-2$ and $S_1\cdot S_2=+2$. 
The exceptional symplectic sphere $S_1\subset W_Y$ allows us to write $W_Y= W'_Y\#\CPb$, 
where $W'_Y$ is a symplectic cap of $Y$ diffeomorphic to a closed neighborhood of an immersed 
nodal symplectic sphere $S'_2$ with self-intersection $2+\ep$. Moreover, it is easy to check that 
$c_1(W'_Y)=PD(S'_2)$. 

Let $X'$ be a closed symplectic 4--manifold obtained by gluing  the symplectic cap $W'_Y$ to $P$ 
along their common boundary. First of all we want to argue that $b^+_2(X')=1$. 
Smoothing the singularity of $S'_2$ we obtain a smoothly embedded  2--torus with 
self-intersection $2+\ep>0$ inside $X'$. But such a torus violates the adjunction inequality, 
which is known to hold for closed, symplectic 4--manifolds 
with $b_2^+>1$. Therefore we must have $b_2^+(X')=1$. 

Now we claim that $c_1(X') = PD(S'_2)\in H^2(X')$ (we are going to use $\Z$ coefficients 
throughout the proof). Observe that each of the cohomology classes $c_1(X')$ and 
$PD(S'_2)$ both restrict as $0$ to $H^2(P)$ and as $c_1(W'_A)$ to $H^2(W'_Y)$. Therefore, in order to 
show that they are equal it suffices to check that the map $H^2(X')\to H^2(P)\oplus H^2(W'_Y)$ appearing in 
the Mayer--Vietoris sequence for the decomposition $X'=P\cup W'_Y$ is injective. This follows from the fact 
that the restriction map $H^1(W'_Y)\to H^1(Y)$ is surjective. The latter is equivalent, by Poincar\' e duality 
and the homology exact sequence of the pair $(W'_Y, Y)$, to the fact that the map $H_2(Y)\to H_2(W'_Y)$ 
induced by inclusion is the zero map, which follows immediately from the fact that $S'_2\cdot S'_2\neq 0$. 
Therefore the claim is established. 

Observe that, if $\om$ is the symplectic form on $X'$, the claim implies 
\[
c_1(X')\cdot [\om] = \int_{S'_2}\om >0.
\]
Thus, we can apply Theorem~\cite{Li96}*{Theorem~B}, which says that if $(X,\om)$ is a closed, symplectic 4--manifold 
with $b_2^+(X)=1$ and $K_X\cdot[\omega]<0$ then  $X$ is either rational (i.e.~a blowup of $\CP$) or ruled, 
i.e.~a symplectic sphere bundle. We conclude that $X'$ is either rational or ruled, and we claim that 
$X'$ cannot be ruled. In fact, suppose the contrary, and let $B$ be the base. Observe that 
$\chi(X') = \chi(B)\chi(S^2) = 2\chi(B)$. Moreover, from the Mayer--Vietoris sequence of the decomposition
$X'=N\cup (X'\setminus N)$, where $N$ is a regular neighborhood of a fiber, it is easy to deduce that $1\leq b_2(X')\leq 2$. 
Since the class of a symplectic fiber is nontrivial and of square zero, this immediately implies $\si(X')=0$. Therefore 
we have $c_1(X')^2 = 3\si(X') +2\chi(X') = 4\chi(B)$, contradicting the fact that $c_1(X')^2 = 2+\ep$ with $\ep\in\{-1,0,1\}$.  
We conclude that $X'$ must be rational, i.e.~symplectomorphic to an $r$--fold blowup 
of $\CP$. This implies $c_1^2(X') = 9-r$, with $r\in\{6, 7, 8\}$, and therefore $c_1(X') = PD(S'_2)= 3h-e_1-\cdots - e_r$, 
where $h$ is the hyperplane class and the classes $e_i$ the exceptional classes. Arguing as in the proof of Theorem~\ref{t:ell1-hyp} 
we can deduce that $S'_2$ is the proper transform of an $r$--fold blowup of a nodal pseudo-holomorphic cubic in $\CP$. 
Since the moduli space of pseudo-holomorphic nodal cubics is connected~\cite{Fr05}*{Theorem~13}, it follows that 
the diffeomorphism type of $P$ is determined, and given by the complement of 
a neighborhood of the strict transform of a nodal holomorphic cubic in an $r$--fold blowup of $\CP$, with $r\in\{6, 7, 8\}$. Finally, using the fact that $b_1(W'_Y)=1$ and $b_1(X')=0$ and arguing as in the proof of Theorem~\ref{t:ell1-hyp} shows that $b_1(P)=0$. 
\end{proof}

\begin{lemma}\label{l:parabolic4}
Let $(X,\omega)$ be a closed, symplectic 4--manifold containing the configuration $\Sigma$ of two transverse symplectic spheres described by the plumbing of Figure~\ref{f:par_bundles} for $n=4$. If $X\setminus \Sigma$ is minimal, then either $X= \CP\#\CPb$ and $\Sigma$ is the strict transform of the configuration $(\ell{\mathcal C}_2)$ blown up at a generic point of the line, or $X = S^2\times S^2$ and $\Sigma$ is the union of $S^2\times\{*\}$ and the graph of a holomorphic map $S^2\to S^2$ of degree $2$. In both cases the first Chern class of $X$ vanishes on 
$X\setminus\Sigma$. 
\end{lemma}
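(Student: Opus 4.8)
The plan is to study the configuration $\Sigma = S_1 \cup S_2$ with $S_1^2 = 0$, $S_2^2 = 4$ and $S_1 \cdot S_2 = 2$ directly inside the closed symplectic manifold $X$, exploiting the fact that $S_1$ is a symplectic sphere of square zero. First I would note that a symplectic sphere of nonnegative self-intersection forces strong constraints: by McDuff's classification (the same input already used in the proof of Theorem~\ref{t:ell1-hyp} via~\cite{McD90}), a symplectic $4$--manifold containing an embedded symplectic sphere of square $0$ is rational or ruled, and in fact, since here $S_1$ has square exactly $0$, $S_1$ is (after a deformation and choice of tame $J$ making both $S_1$ and $S_2$ pseudo-holomorphic) a fiber of a symplectic $S^2$--fibration $X \to B$. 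The adjunction-type argument from Lemma~\ref{l:parabolic4}'s hypothesis is not yet available because we have not assumed $b_2^+>1$; instead I would use minimality of $X\setminus\Sigma$ to pin down the base $B$.

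Next I would argue $B = S^2$. If $X$ is ruled over a surface of positive genus, then the fiber class $[S_1]$ is primitive and the complement $X \setminus \Sigma$ would have to contain the preimage of $B$ minus finitely many points; one gets a contradiction with minimality of $X\setminus\Sigma$ by producing an exceptional sphere, or more simply by a Betti-number/Euler characteristic count as in the ruled-case discussion inside the proof of Theorem~\ref{t:ell2}. So $X$ is an $S^2$--bundle over $S^2$, hence diffeomorphic to $S^2\times S^2$ or to $\CP\#\CPb$, and after blowing down (if necessary) the unique exceptional section and using minimality of the complement, $X$ is one of these two with $\Sigma$ sitting in the stated position. Concretely: $S_1$ is a fiber $S^2\times\{*\}$, and $S_2$ is a symplectic sphere meeting each fiber twice, i.e. a (pseudo-holomorphic) bisection; such a curve is the graph of a degree-$2$ map $S^2\to S^2$ when $X = S^2\times S^2$, and in the $\CP\#\CPb$ case one blows down to recover the basic configuration $(\ell\mathcal C_2)$ blown up once on the line, exactly as in Lemmas~\ref{l:par_sd} and~\ref{l:hyp_sd}. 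Here I would invoke Gromov's theorem on the connectedness of the relevant moduli space of pseudo-holomorphic bisections (as cited via~\cite{Gr85}, \cite{St15}) to get uniqueness up to isotopy, and hence identify $\Sigma$ up to symplectomorphism.

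Finally, for the Chern class statement: in both models one computes $c_1(X)$ explicitly — $c_1(S^2\times S^2) = 2h_1 + 2h_2$ and $c_1(\CP\#\CPb) = 3h - e$ — and checks that $PD(c_1(X))$ is represented by a cycle supported on $\Sigma$ (a positive combination of $[S_1]$ and $[S_2]$: indeed $2h_1+2h_2 = 2[S_1]+[S_2]$ after identifying $[S_2]$ with the bisection class, and similarly in the other case $[\Sigma]$ is Poincaré dual to $c_1$ by the adjunction/blowup bookkeeping). Then the same Mayer--Vietoris injectivity argument used in the proof of Theorem~\ref{t:ell2} — namely that $H_2(Y) \to H_2(X\setminus\Sigma)$ vanishes because $S_1^2$ or $S_2^2$ is nonzero, so that $H^2(X)\to H^2(X\setminus\Sigma)\oplus H^2(N(\Sigma))$ is injective — shows $c_1(X)$ restricts to $0$ on $X\setminus\Sigma$.

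The main obstacle I anticipate is the step identifying $B = S^2$ and handling the $\CP\#\CPb$ case cleanly: one must be careful that the exceptional section of the Hirzebruch surface is disjoint from $S_1$ (automatic, being a section) but may or may not meet $S_2$, so the blow-down producing the basic configuration $(\ell\mathcal C_2)$ requires checking that minimality of $X\setminus\Sigma$ forces the exceptional curve to be absorbed into $\Sigma$ in the one allowed way — essentially the bookkeeping already carried out in the proof of Lemma~\ref{l:hyp_sd} and in~\cite{Li08}*{Theorem 4.2}, which I would cite rather than redo.
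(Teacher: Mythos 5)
Your overall strategy is viable and close in spirit to the paper's, but you and the paper lean on McDuff in different ways, and the difference matters for how much residual work is left. The paper applies \cite{McD90}*{Corollary~1.5} to the square-$4$ sphere $S_2$: this immediately presents the pair $(X,S_2)$ as an $r$--fold blowup of either $(\CP,\text{conic})$ or $(S^2\times S^2,\Gamma)$ with all exceptional spheres disjoint from $S_2$, after which the only thing left is a short homological computation for $[S_1]$ (using $S_1\cdot S_2=2$, $S_1\cdot S_1=0$, adjunction and positivity of intersections) forcing $r=1$ in the first case and $r=0$ in the second by minimality of $X\setminus\Sigma$. You instead apply McDuff to the square-zero sphere $S_1$ to obtain a ruling; that is legitimate, but it leaves genuinely more to prove, and two of your steps do not go through as stated.

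First, your justification that the base $B$ is a sphere is not correct as written: minimality of $X\setminus\Sigma$ says nothing about the genus of $B$, and the Euler-characteristic count in the proof of Theorem~\ref{t:ell2} is unavailable here because it presupposes knowing $c_1(X)^2$, which you do not yet know. The conclusion is true, but it needs an actual argument --- either the cup-product computation the paper runs in the proof of Theorem~\ref{t:par} (pull back $\alpha\cup\beta$ from a positive-genus base and evaluate on $[S_2]$, which projects with degree $2$), or a Riemann--Hurwitz argument for the pseudo-holomorphic bisection $S_2\to B$. Second, the decisive bookkeeping that rules out extra blowups is deferred to Lemma~\ref{l:hyp_sd} and \cite{Li08}*{Theorem 4.2}, neither of which covers this configuration (the former constructs divisors, the latter treats chains of spheres, not a fiber-plus-bisection pair). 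What is actually needed is the explicit system of equations coming from the self-intersections, adjunction and $S_1\cdot S_2=2$ --- the $n=4$ instance of the computation in the proof of Theorem~\ref{t:par} --- showing that any exceptional class would either miss $\Sigma$ or yield an exceptional sphere in $X\setminus\Sigma$; and you cannot cite Theorem~\ref{t:par} for this, since its proof invokes the present lemma precisely in the case $n=4$. Finally, a small slip: in $S^2\times S^2$ one has $PD(c_1(X))=[S_1]+[S_2]$, not $2[S_1]+[S_2]$; this does not affect the vanishing of $c_1$ on $X\setminus\Sigma$, for which your Mayer--Vietoris argument is fine.
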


\begin{proof}
Let $S_1, S_2$ be the two symplectic spheres of $\Sigma$, with $S_1\cdot S_1 = 0$, $S_1\cdot S_2=+2$ and 
$S_2\cdot S_2 = +4$. By~\cite{McD90}*{Corollary~1.5}, the pair $(X,S_2)$ is an $r$--fold blowup of either $(\CP,q)$ or 
$(S^2\times S^2,\Gamma)$, where $q$ is a conic, $\Gamma$ is the graph of a holomorphic map 
$S^2\to S^2$ of degree $2$, and the exceptional spheres in $X$ are all disjoint from $S_2$. 
Call $e_1,\dots, e_r$ the exceptional  homology classes. 

If $(X,S_2)$ is a blowup of $(\CP,q)$, let $h$ be the homology class of a complex line in $\CP$, so that $[S_2] = 2h$. 
The conditions $S_1\cdot S_2 = 2$ and $S_1\cdot S_1 = 0$ imply $[S_1] = h-\sum x_ie_i$, with exactly one index $i$ such that $x_i=1$, while $x_j=0$ for each $j\neq i$. By positivity of intersections~\cite{McD91} each exceptional sphere 
is disjoint from $\Sigma$, and since $X\setminus \Sigma$ is minimal this means that $r = i = 1$. Moreover, the Poincar\'e
dual of $c_1(X)$ equals $[S_1]+[S_2]$, and the statement is proved in this case.

If $(X,S_2)$ is a blowup of $(S^2\times S^2,\Gamma)$ then $[S_2]=2s+f$, where 
$s = [S^2\times\{*\}]$ and $f = [\{*\}\times S^2]$. 
We have $[S_1] = as + bf - \sum x_ie_i$, with $a,b,x_i\ge 0$ by positivity of intersections.
Imposing that $S_1\cdot S_2 = 2$ we obtain $a+2b = 2$, therefore either $(a,b)=(2,0)$ or $(a,b)=(0,1)$. 
Imposing that $S_1\cdot S_1 = 0$ we obtain that $x_i = 0$ for each $i$. Finally, the adjunction formula excludes the case $(a,b) = (2,0)$, hence $[S_1] = f$, and positivity of intersections implies that each exceptional sphere in $X$ is disjoint from $\Sigma$. Therefore, since $X\setminus \Sigma$ is minimal, in this case we have $r = 0$. As in the previous case 
$c_1(X) = [S_1]+[S_2]$, and the statement is proved.
\end{proof}

\begin{lemma}\label{l:greaterthan4}
For $n>4$ the configuration of two symplectic spheres described by the plumbing of Figure~\ref{f:par_bundles} does not embed in any closed, symplectic 4--manifold.
\end{lemma}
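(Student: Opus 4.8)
The plan is to mimic the structure of the proof of Lemma~\ref{l:parabolic4}, but to exploit the fact that for $n>4$ the required self-intersection of $S_2$ (namely $n>4$) is too large to fit inside a minimal rational or ruled surface once the constraint imposed by $S_1$ is taken into account. First I would argue by contradiction: suppose $\Si = S_1\cup S_2$ embeds in a closed symplectic $4$--manifold $(X,\om)$, with $S_1\cdot S_1 = 0$, $S_1\cdot S_2 = 2$ and $S_2\cdot S_2 = n > 4$. Smoothing the transverse intersection points of $\Si$ produces a smoothly embedded symplectic surface of genus $g = S_1\cdot S_2 = 2$ with self-intersection $(S_1+S_2)^2 = n+4 > 8 > 0$, and by the adjunction inequality for closed symplectic $4$--manifolds with $b_2^+ > 1$ this forces $b_2^+(X) = 1$. (Alternatively the symplectic sphere $S_1$ of square $0$ already shows $X$ is rational or ruled by McDuff.)

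Next, since $S_2$ is an embedded symplectic sphere of positive square, I would invoke~\cite{McD90}*{Corollary~1.5} exactly as in the proof of Lemma~\ref{l:parabolic4}: the pair $(X,S_2)$ is an $r$--fold blowup of $(\CP, q)$ where $q$ is a curve of degree $k$ with $k^2 = $ (the square of $[S_2]$ before blowups) $\geq n > 4$, or of $(S^2\times S^2, \Ga)$ where $\Ga$ is the graph of a degree $k$ holomorphic map, with the exceptional spheres disjoint from $S_2$. Writing $[S_2] = kh - \sum y_i e_i$ (respectively $[S_2] = s + kf - \sum y_i e_i$) and $[S_1] = ah + \dots$ or $[S_1] = as + bf - \dots$ with nonnegative coefficients by positivity of intersections~\cite{McD91}, the conditions $S_1\cdot S_2 = 2$, $S_1\cdot S_1 = 0$, together with the adjunction formula $2g(S_2) - 2 = S_2\cdot S_2 + K_X\cdot S_2$ and $2g(S_1)-2 = S_1\cdot S_1 + K_X\cdot S_1$ (both spheres, so both equal $-2$), give a small system of Diophantine constraints. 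The key point is that $S_1\cdot S_2 = 2$ severely restricts how large the ``degree'' of $S_2$ can be: in the $\CP$ case $2 = S_1\cdot S_2 \geq a k$ (since $[S_1]$ pairs positively with $h$ and the $e_i$ coefficients of $[S_2]$ are bounded by blow-up combinatorics) forces either $a = 0$, contradicting $S_1\cdot S_1 = 0$ after using that $[S_1]$ is nonzero with nonnegative coefficients and square $0$ — the only square-$0$ effective class in a blowup of $\CP$ of that form must involve $h$ — or $ak \leq 2$, so $k \leq 2$; but $k^2 \geq n > 4$ gives $k \geq 3$, a contradiction. The $S^2\times S^2$ case is handled identically, with $S_1\cdot S_2 = a + 2bk$ or similar, again bounding $k$ and contradicting $k^2 \geq n$.

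The main obstacle I anticipate is bookkeeping the exact relation between $S_2\cdot S_2 = n$, the degree $k$ of the curve $q$ (or map $\Ga$) it descends from, and the exceptional coefficients $y_i$: the square $S_2\cdot S_2$ equals $k^2 - \sum y_i^2$ (resp.\ $2k - \sum y_i^2$), so I must argue that, given the very tight bound $k \leq 2$ coming from $S_1\cdot S_2 = 2$, one has $S_2\cdot S_2 \leq k^2 \leq 4$ (resp.\ $S_2\cdot S_2 \leq 2k \leq 4$), which is incompatible with $S_2\cdot S_2 = n > 4$. Making precise the inequality $S_1\cdot S_2 \geq (\text{positive coefficient of }[S_1]\text{ on }h)\cdot k$ requires knowing that every blow-up producing $S_2$ occurs at a point disjoint from $S_2$ (guaranteed by~\cite{McD90}*{Corollary~1.5}), so the coefficients $y_i$ are nonnegative and the intersection $S_1\cdot S_2$ only gains positive contributions from the $h$-- and $f$--parts; I would spell this out carefully. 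Once the degree bound $k \leq 2$ is in hand, the contradiction with $n > 4$ is immediate and the proof is complete.
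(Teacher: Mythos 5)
Your overall strategy (run the $n=4$ analysis directly at square $n$ and extract a degree bound from $S_1\cdot S_2=2$) is reasonable, but as written it rests on a misquotation of McDuff's result, and this leaves a genuine case unhandled. \cite{McD90}*{Corollary~1.5} does not say that $(X,S_2)$ blows down to $(\CP,q)$ with $q$ of arbitrary degree $k$: the only pairs it produces are $(\CP,\text{line})$, $(\CP,\text{conic})$, and ($S^2$--bundle, fiber or section). Since the blow-downs occur away from $S_2$, the square of the image equals $n>4$, so the line ($+1$), conic ($+4$) and fiber ($0$) cases are all excluded outright, and the only surviving possibility is that $S_2$ descends to a \emph{section} of an $S^2$--bundle over $S^2$. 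Your two branches cover sections of $S^2\times S^2$ (classes $s+kf$, square $2k$, i.e.\ $n$ even), and there your computation does close up ($S_1\cdot S_2=\alpha k+\beta=2$ with $k\geq 3$ forces $[S_1]=2f$, which violates adjunction). But you never consider the nontrivial bundle $\CP\#\CPb$, whose sections have classes $\tfrac{n+1}{2}h-\tfrac{n-1}{2}e_1$ of odd square $n$; for odd $n\geq 5$ this is exactly the case that occurs, it does not fit either of your templates (blowing down $e_1$ does not yield an embedded plane curve once $\tfrac{n-1}{2}\geq 2$), and ruling it out requires its own Diophantine analysis combining $S_1\cdot S_2=2$, $S_1^2=0$, adjunction and positivity of intersections. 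Until that case is treated, the proof is incomplete. (Two smaller points: resolving two transverse intersection points of two spheres gives a genus~$1$ surface, not genus~$2$ --- harmless here; and in the $\CP$ branch adjunction alone already forces $k\in\{1,2\}$, so that branch is vacuous for $n>4$ without any appeal to $S_1$.)

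For comparison, the paper avoids this case division entirely by a reduction: it blows up $X^0$ at $n-4$ generic points of $S_2^0$, so that the proper transforms realize the $n=4$ configuration already classified in Lemma~\ref{l:parabolic4}, and then derives a contradiction from the new exceptional class $e_1$ --- since $[S_2]\cdot e_1=1$ the class $[S_2]$ is not even, which kills the $(\CP,\text{conic})$ alternative, while in the $S^2\times S^2$ alternative every exceptional sphere must be disjoint from $S_1\cup S_2$, contradicting $[S_2]\cdot e_1=1$. That route buys a two-line proof at the cost of having proved the $n=4$ lemma first; your direct route is self-contained in principle but must confront the full list of McDuff's models, including the twisted bundle.
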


\begin{proof}
Suppose by contradiction that there exists a closed symplectic 4--manifold $(X^0,\omega^0)$ containing two embedded spheres $S^0_1$, $S^0_2$ of self-intersection $0$ and $n$ respectively, with $S^0_1\cdot S^0_2 = 2$. 
Let $(X,\om)$ be the symplectic 4--manifold obtained by blowing up $X^0$ at $n-4$ generic points of $S^0_2$. Let $e_1,\dots, e_{n-4}$ be the corresponding exceptional classes, and $S_1$, $S_2$ the proper transforms of $S^0_1$ and $S^0_2$,  respectively. Now $S_1\cdot S_1 = 0$, $S_2\cdot S_2=4$ and $S_1\cdot S_2 = 2$. 
Notice that $[S_2]\cdot e_1=1$, and therefore the homology class $[S_2]$ 
cannot be even. By~Lemma~\ref{l:parabolic4}, and since $[S_2]$ is not even, $(X,\omega)$ must be a blowup of $S^2\times S^2$ with $[S_1]=f$, $[S_2]=2s+f$, where $s = [S^2\times\{*\}]$ and $f = [\{*\}\times S^2]$
and all the exceptional spheres are disjoint from $S_1\cup S_2$. 
This contradicts the fact that $[S_2]\cdot e_1=1$.
\end{proof}

\begin{thm}\label{t:par}
Let $Y$ be a parabolic torus bundle over $S^1$ of the form $Y = -\T_{A(0,-n)}$ with $n\leq 4$. Then, all minimal, strongly convex symplectic fillings of $(Y,\xi_Y)$ have vanishing first Chern class and first and third Betti numbers, and 
second Betti number equal to $4-n$. Moreover, if $n<4$ they are pairwise orientation-preserving diffeomorphic, while if $n=4$ they fall into at most two diffeomorphism classes.
\end{thm}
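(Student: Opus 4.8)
The plan is to run the same cap--and--glue strategy used in the proofs of Theorems~\ref{t:ell1-hyp} and~\ref{t:ell2}, now feeding in the configuration analysis carried out in Lemmas~\ref{l:parabolic4} and~\ref{l:greaterthan4}. Start with a minimal, strongly convex symplectic filling $(P,\om_P)$ of $(Y,\xi_Y)$, where $Y=-\T_{A(0,-n)}$ with $n\le 4$. By Theorem~\ref{t:caps}\eqref{caps-item3} the bundle $Y$ admits a symplectic cap $W_Y$ which is a closed regular neighbourhood of a spherical complex divisor $D=S_1\cup S_2$ with intersection graph dual to Figure~\ref{f:par_bundles}, so $S_1\cdot S_1=0$, $S_2\cdot S_2=n$ and $S_1\cdot S_2=2$; moreover $b_1(Y)=1$ by the same theorem. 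Since $\del W_Y$ is strictly $\om$--concave we may glue $W_Y$ to $P$ along $Y$ (after rescaling one symplectic form) to obtain a closed symplectic $4$--manifold $(X,\om)=W_Y\cup_Y P$ containing the configuration $\Sigma=S_1\cup S_2$ of Figure~\ref{f:par_bundles}. Because $P$ is minimal and every exceptional sphere produced in the gluing argument must, by positivity of intersections, either lie in $\Sigma$ or meet it, and $\Sigma\subset W_Y$, the complement $X\setminus\Sigma$ is symplectically minimal. If $n>4$ this already contradicts Lemma~\ref{l:greaterthan4}, so no filling exists for $n>4$ --- but this is outside the range of the statement, and for $n\le 4$ we instead blow up $X$ at $4-n$ generic points of $S_2$ to reduce to the normalized configuration with $S_2\cdot S_2=4$, preserving minimality of the complement.

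Next I would invoke Lemma~\ref{l:parabolic4} applied to the blown--up $(X',\Sigma')$: the complement of $\Sigma'$ being minimal forces $X'$ to be either $\CP\#\CPb$ or $S^2\times S^2$, with $\Sigma'$ the explicit configuration described there and, crucially, $c_1(X')$ vanishing on $X'\setminus\Sigma'$. Tracing the blowdown back, $c_1(X)$ vanishes on $X\setminus\Sigma$; since $X\setminus\Sigma$ contains $P$ up to a collar, $c_1(P)=0$. The Betti number computation is then the same homological bookkeeping as in Theorem~\ref{t:ell1-hyp}: from the Mayer--Vietoris sequence of $X=W_Y\cup P$, using $b_1(Y)=1$, $b_1(X')\in\{0\}$ and $b_1(W_Y)=1$, one extracts $b_1(P)=b_3(P)=0$, and then Euler characteristic or direct rank count gives $b_2(P)=4-n$ (the cap contributes $b_2(W_Y)=1$ from $S_2$, and in the $n=4$ normalization $X'$ has $b_2\le 2$). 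The uniqueness statement for $n<4$ follows because, once $X'$ and the homology classes of $S_1,S_2$ are pinned down, the configuration $\Sigma$ is determined up to isotopy --- in the $\CP\#\CPb$ case via Gromov's theorem that a line and a conic in general position form a unique configuration up to isotopy~\cite{Gr85}, and in the $S^2\times S^2$ case via connectedness of the space of degree--$2$ graphs --- and there is a unique way to blow up to recover $\Sigma$ from the basic configuration, so $P=X\setminus\Int W_Y$ (equivalently the complement of $\Sigma$) has a determined diffeomorphism type. For $n=4$ no preliminary blowup is needed and Lemma~\ref{l:parabolic4} directly yields exactly the two models $\CP\#\CPb$ and $S^2\times S^2$, hence at most two diffeomorphism classes.

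The main obstacle I expect is ensuring that the complement $X\setminus\Sigma$ really is minimal in the precise sense Lemma~\ref{l:parabolic4} requires, and that the preliminary blowup for $n<4$ does not break this. One has to argue that any exceptional sphere $E$ in $X$ can be taken $J$--holomorphic for an $\om$--tame $J$ making $S_1,S_2$ holomorphic, so that positivity of intersections applies and $E\cdot[S_i]\ge 0$; the combinatorics of the graph (both weights nonnegative, $S_1$ a square--zero sphere) then forces $E$ to be disjoint from $\Sigma$ or contained in it, and disjointness would contradict minimality of $P$ unless $E\subset W_Y$, but $W_Y$ contains no exceptional sphere here since its divisor has both self--intersections $\ge 0$. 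A secondary subtlety is bookkeeping the orientation--preserving part of the diffeomorphism classification and confirming $b_3(P)=0$ and the exact value $b_2(P)=4-n$ through the two model cases; this is routine but must be done carefully because $S^2\times S^2$ and $\CP\#\CPb$ have different intersection forms even though they share Betti numbers, which is exactly why the $n=4$ case genuinely splits into two classes rather than one.
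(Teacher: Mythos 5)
Your overall cap--and--glue framework matches the paper's, but the central reduction step is backwards and does not work. You propose, for $n<4$, to ``blow up $X$ at $4-n$ generic points of $S_2$ to reduce to the normalized configuration with $S_2\cdot S_2=4$.'' Blowing up at a generic point of a sphere \emph{decreases} its self-intersection by one, so $4-n$ such blowups turn $S_2\cdot S_2=n$ into $2n-4$, not $4$; there is no way to raise the self-intersection from $n$ to $4$ by blowing up, and you have no exceptional spheres meeting $S_2$ available a priori to blow down. (The normalization you are imitating is the one in Lemma~\ref{l:greaterthan4}, which goes from $n>4$ \emph{down} to $4$ --- that direction is fine.) Consequently Lemma~\ref{l:parabolic4} cannot be invoked for $n<4$ the way you intend. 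The paper instead treats $n<4$ directly: McDuff's results applied to the square-zero sphere $F$ exhibit $X$ as a blowup of an $S^2$-bundle (with a separate cup-product argument forcing the base to have genus $0$), one writes $[C]=2[S]+a[F]+\sum x_ie_i$ and solves the three constraints coming from $C\cdot C=n$, adjunction, and $C\cdot F=2$, arriving at $\sum_{i>1}(2b_i-b_i^2)=4-n$; Lemma~\ref{l:greaterthan4} then enters only in the blow-\emph{down} direction, to cap the number of coefficients equal to $1$ at exactly $4-n$, and minimality of $P$ excludes the values $0$ and $2$. This pins down $X\cong\CP\#(5-n)\CPb$ or $(S^2\times S^2)\#(4-n)\CPb$ together with the classes of $F$ and $C$, and gives $b_2(P)=4-n$.

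A second, independent gap: your argument for $n<4$ only yields \emph{at most two} diffeomorphism classes (one for each ambient model), the same conclusion as for $n=4$, whereas the theorem asserts a \emph{unique} class when $n<4$. The paper closes this by producing an explicit symplectomorphism $\CP\#m\CPb\cong(S^2\times S^2)\#(m-1)\CPb$ (for $m\geq 2$) carrying $h-e_1$ to $f$ and $2h-e_2$ to $2s+f-e$, hence matching the two configurations up to isotopy and showing their complements are diffeomorphic; for $n=4$ no such identification exists ($\CP\#\CPb$ and $S^2\times S^2$ are not diffeomorphic and have no extra exceptional classes to trade), which is exactly why that case genuinely splits into two. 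Your remarks on minimality of $X\setminus\Sigma$ and on the Mayer--Vietoris computation of $b_1(P)=b_3(P)=0$ are fine, but the two issues above are essential and must be repaired.
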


\begin{proof}
Let $P$ be a strongly convex symplectic filling of $(Y,\xi_Y)$. 
Let $W_Y$ be the symplectic cap of Theorem~\ref{t:caps}, Case~\ref{caps-item3}, and let $X$ be the closed, symplectic 4--manifold obtained 
by gluing $W_Y$ and $P$ together. The spherical symplectic divisor $D$ is contained in $X$, and therefore $X$ 
contains a symplectic sphere $F$ having self-intersection $0$ and a symplectic sphere $C$ of self-intersection $n$, with 
$F\cdot C=2$. Thus, according to~\cite{McD90}*{Theorem~1.4 and Corollary~1.5}, 
$X$ is symplectomorphic to a symplectic blowup of a symplectic $S^2$--bundle $p: X_0\to B$, in such a way that $F$ is mapped to 
a fiber. Let $e_1,\dots,e_N\in H_2(X;\Z)$ be the exceptional classes. Recall that a basis of the group $H_2(X;\Z)$ is given by 
the classes $[S], [F], e_1,\ldots, e_\ell$, where $S$ is a section of $p$. 
Moreover, both $[S]$ and $[F]$ are orthogonal to the classes $e_i$ and $[S]\cdot [F]=1$. Therefore we have 
$[C] = 2[S]+a[F]+\sum_i x_i e_i$ for some $a, x_i\in\Z$. 
We now claim that the base $B$ of the fibration has genus $g=0$.
In fact, suppose by contradiction that $g>0$. Then, there exist  
$\al, \be\in H^1(B;\R)$ such that $\langle \al\cup\be, [B]\rangle \neq 0$. Viewing 
$H^2(X_0;\R)$ as the subspace of $H^2(X;\R)$ consisting of those classes which vanish on 
$e_1,\ldots, e_\ell$, we have  
\begin{align*}
\langle p^*(\al)\cup p^*(\be), [C]\rangle &= 
\langle p^*(\al)\cup p^*(\be), 2[S]+a[F]\rangle = \\
&= \langle \al\cup\be, p_*(2[S]+a[F])\rangle 
= 2 \langle \al\cup\be, [B]\rangle \neq 0.
\end{align*}
On the other hand, since $C$ is a sphere the group $H^1(C;\R)$ vanishes, therefore 
\[
\langle p^*(\al)\cup p^*(\be), [C]\rangle = 0.
\]
This contradiction shows that $g=0$. That is, $X_0$ fibers over $\CPl$. Since there are two symplectic fibrations over $\CPl$ up to symplectomorphism, this means that $X$ is either a blowup of $\CP\#\CPb$ or a blowup of $S^2\times S^2$. 
In the first case $[F]=h-e_1$ and $[C] = ah-\sum b_i e_i$, where $h$ is the class of a line in $\CP$ and $a>0$, $b_i\ge 0$ by positivity of intersections~\cite{McD91}. In the second case $[F] = f$ and $[C]=as+bf-\sum c_ie_i$, where 
$s = [S^2\times\{*\}]$, $f = [\{*\}\times S^2]$ and by positivity of intersections $a, b, c_i\geq 0$. Notice that in both cases we have $b_1(X)=0$. Since $b_1(W_Y)=b_1(Y)=1$, the same Mayer--Vietoris argument used in the proof of Theorem~\ref{t:ell1-hyp} shows that $b_1(P)=b_3(P)=0$.
When $n=4$ the statement follows directly from Lemma~\ref{l:parabolic4}, therefore from now on we assume 
$n<4$. Our strategy will be to reduce the case $n < 4$ to the case $n=4$. 

We first analyze the case when $X$ is a blowup of $\CP\#\CPb$. We have three equations satisfied by $n$, $a$ and 
the numbers $b_i$. The first one comes from the self-intersection of $C$, the second one from the adjunction formula 
and the third one from the fact that $C$ intersects $F$ twice. They are given, respectively, by: 
\begin{equation}\label{e:par_system}
\left\{
\begin{array}{l}
n = a^2 - \sum b_i^2,\\
3a-\sum b_i = n+2,\\
a-b_1 = 2
\end{array}
\right.
\end{equation}
Subtracting the third equation from the second in~\eqref{e:par_system} we obtain
\begin{equation}\label{e:par_eq1}
2a-\sum_{i>1} b_i = n.
\end{equation}
The third equation in~\eqref{e:par_system} implies that $a^2-b_1^2 = a^2-(a-2)^2 = 4a-4$. 
Substituting this into the first equation in~\eqref{e:par_system} we get
\begin{equation}\label{e:par_eq2}
4a-4 - \sum_{i>1}\ b_i^2 = n.
\end{equation}
Now we subtract twice Equation~\eqref{e:par_eq1} from Equation~\eqref{e:par_eq2}, obtaining
\[ 
\sum_{i>1} (2b_i - b_i^2) = 4-n.
\]
Since $n\le 4$, the sum on the left-hand side must be nonnegative. For each index $i$ we have $2b_i-b_i^2\leq 1$, 
and equality holds if and only if $b_i=1$, therefore there must be at least $4-n$ indices $i>1$ such that $b_i = 1$. 
If there were $m>4-n$ indices $i_1,\dots i_m$ such that $b_{i_1} = \dots = b_{i_m} =1$, by blowing down the corresponding exceptional spheres we would obtain a configuration $C' \cup F$ of symplectic spheres in a 
closed symplectic 4--manifold contradicting Lemma~\ref{l:greaterthan4}. Therefore there must be exactly $4-n$ indices $i>1$ for which $b_i = 1$. It follows that for all other indices $j$ we have $b_j\in\{0,2\}$. However, by positivity of intersections, $b_j = 0$ corresponds to an exceptional divisors $e_j$ disjoint from $C\cup F$, against our assumption of minimality on the filling $P$. On the other hand, if $b_j = 2$ and $j>1$ then the class $h-e_1-e_j$ is a represented by an exceptional sphere disjoint both from $C$ and $F$, again contradicting the minimality of $P$. 

We conclude that total number of of exceptional classes is exactly $N=5-n$, and that $a=b_1+2$ and 
$b_2=\dots=b_N = 1$. Substituting these values in the second equation of~\eqref{e:par_system} we obtain $a=2$ and $b_1 = 0$. Summarizing, in this case $X$ is symplectomorphic to $\CP\#(5-n)\CPb$ and the 
spheres $F$ and $C$ are represented respectively by classes $h-e_1$ and $2h-e_2-\dots-e_{5-n}$. Moreover, 
from the Mayer--Vietoris sequence we get $b_2(P)=4-n$. 

Recall that in the second case, i.e.~when $X$ is a blowup of $S^2\times S^2$, we have $[F] = f$ and $[C]=as+bf-\sum c_ie_i$, where 
$s = [S^2\times\{*\}]$, $f = [\{*\}\times S^2]$ and by positivity of intersections $a, b, c_i\geq 0$. In fact, 
the minimality of $P$ implies $c_i>0$ for each $i$. Keeping in mind that the canonical class of $S^2\times S^2$ is 
Poincar\'e dual to $-2s-2f$, the analogues of Equations~\eqref{e:par_system} are 
\begin{equation}\label{e:par_system2}
\left\{
\begin{array}{l}
n =  2ab - \sum c_i^2,\\
2a+2b-\sum c_i = n+2,\\
a = 2.
\end{array}
\right.
\end{equation}
Manipulating the equations as in the previous case we obtain
\[
\sum_{i\ge 1} (2c_i-c_i^2) = 4-n,
\]
from which we deduce that $c_i\in\{1,2\}$ for each $i$. Finally, we observe that if $c_i=2$, the class 
$f-e_i$ is represented  by an exceptional sphere disjoint from $C\cup F$, contradicting the minimality of $P$.
Therefore $c_i=1$ for each $i$ and $b=1$. We conclude that $X$ is symplectomorphic to $(S^2\times S^2)\#(4-n)\CPb$ 
and the classes of $F$ of $C$ are given by $f$ and $2s+f-e_1-\dots-e_{4-n}$ respectively. As before, the 
Mayer--Vietoris sequence yields $b_2(P)=4-n$. 

As in the proof of Theorem~\ref{t:ell1-hyp}, up to isotopy we may assume that $D=F\cup C$ is the strict transform of a configuration $(\ell\mathcal{C}_2)$ or $S^2\times\{*\}\cup\Gamma$, each of which carries a homology class Poincar\'e dual to $c_1(X)$. Therefore, in each of the two cases the complement $K$ in $X$ of a regular neighborhood of the configuration $D$ is determined up to diffeomorphisms and the restriction of $c_1(X)$ to $K$
vanishes. This implies that the symplectic filling $P$ belongs to one of the two diffeomorphism 
classes above and that $c_1(P)=0$. 

In order to finish the proof it suffices to show that if $n<4$ the complements of regular neighborhoods of the 
configuration in the two cases are diffeomorphic. 

Observe that $\CP\#2\CPb$ contains an exceptional sphere $R$ representing the characteristic class $h-e_1-e_2$
and a symplectic sphere $T$ with $[T]=h-e_1$ and $T\cap R=\emptyset$. This implies that $\CP\#2\CPb$ 
is a symplectic blowup of a a spin, symplectic 4--manifold $Z$ containing a symplectic sphere of square zero. By 
the results of~\cite{McD90}, $Z$ is diffeomorphic to $S^2\times S^2$. This shows that there is a symplectomorphism 
$\psi:\CP\#2\CPb \to (S^2\times S^2)\#\CPb$, sending the class $h-e_1-e_2$ to the exceptional class $e$ and the class $h-e_1$ to $f$. It is easy to check that $\psi$ must also send the homology class $2h-e_2$ 
to the homology class $2s+f-e$. Gromov's results~\cite{Gr85} imply that, up to isotopy, $\psi$ maps the strict transform of a line representing the class $h-e_1$ to the strict trasforn of a sphere $\{*\}\times S^2$ representing the class $f$, 
and the strict transform of a conic representing the class $2h-e_2$ to the strict transform of a  
graph $\Gamma$ representing the homology class $2s+f-e$. 
Clearly, for each $m\geq 2$ there is a symplectomorphism between $\CP\# m\CPb$ and $S^2\times S^2\# (m-1)\CPb$ 
with the same properties. This shows that the complements of regular neighborhoods of the configuration in the two 
cases are diffeomorphic to each other, and concludes the proof.   
\end{proof}

In view of Theorem~\ref{t:ell1-hyp}, it is natural to wonder how many diffeomorphism types of strongly convex, minimal  
symplectic fillings a given contact hyperbolic torus bundle $(Y, \xi_Y)$ may have. We do not 
answer this question in general, but we are able to establish the following result.

\begin{prop}\label{p:distfill}
There exist infinitely many contact hyperbolic torus bundles $(Y, \xi_Y)$ admitting non-homotopy equivalent 
Stein fillings. 
\end{prop}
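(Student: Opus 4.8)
The plan is to exhibit a concrete infinite family of embeddable tuples $d$ giving hyperbolic torus bundles $Y=\T_{-A(d)}\in\F$ for which the classification machinery of Theorem~\ref{t:ell1-hyp}, Case~\ref{ell1-hyp-item3}, leaves room for (at least) two genuinely different blowup patterns of the basic configuration $(3\ell)$, and then to distinguish the two resulting Stein fillings by an invariant that is insensitive to the ambiguity inside a single diffeomorphism class but does separate the two classes. The natural candidate invariant is the intersection form: by Theorem~\ref{t:ell1-hyp} each minimal filling $P$ has $b_1(P)=b_3(P)=0$ and $c_1(P)=0$, so $H_1(P;\Z)$ is finite and the intersection form $Q_P$ on $H_2(P;\Z)$ is the essential piece of data; two fillings with non-isomorphic $Q_P$ (even after accounting for torsion in $H_1$) cannot be homotopy equivalent, since a homotopy equivalence of compact oriented $4$--manifolds with boundary induces an isomorphism on $H_2$ respecting the (relative) cup product pairing. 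Actually, since the paper's abstract and the statement of Proposition~\ref{p:distfill} mention fillings with the \emph{same} Betti numbers, the cleanest route is to arrange the two fillings to have equal $b_2$ but non-isometric intersection lattices, and to observe that by the discussion in the proof of Theorem~\ref{t:ell1-hyp} the filling $P$ is the complement of a neighborhood of $\widetilde D$ in $\CP\#N\CPb$, so $Q_P$ is computed as the orthogonal complement (over $\Q$, or integrally after taking care of torsion) of the sublattice spanned by the classes of the spheres in $\widetilde D$.

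The key steps, in order, would be: (1) Choose the family. Take, for instance, $d^{(k)}=\rho^{-1}$ of a string of the form obtained by two different blowup sequences from $(0,0)$ that agree in length and in the value of $[\widetilde D]^2$ (hence in $N=9-[\widetilde D]^2$ and in $b_2(P)=N+1-b_2(W_Y)$) but differ in the pattern of which pairs of the three lines get blown up; concretely one compares a ``linear chain'' blowup with a ``branched'' one, so that the lattice spanned by $[\widetilde D]$ inside $\langle h\rangle\oplus\langle -e_1\rangle\oplus\cdots\oplus\langle -e_N\rangle$ is different in the two cases. Verify embeddability ($s\prec\rho(d^{(k)})$ for a blowup $s$ of $(0,0)$) so that $Y^{(k)}=\T_{-A(d^{(k)})}\in\F$, and verify hyperbolicity ($d_i\ge2$ all $i$, $d_i\ge3$ some $i$). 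Letting $k$ vary (e.g.\ by padding with more $2$'s, or by increasing one entry) produces infinitely many distinct bundles, distinct because of the conjugacy criterion of~\cite{Ne81}*{Lemma~6.2}. (2) Identify the two fillings. For each $Y^{(k)}$, run the construction in the proof of Theorem~\ref{t:ell1-hyp} along each of the two blowup sequences; this yields two embeddings $\widetilde D\hookrightarrow\CP\#N\CPb$ with the prescribed self-intersections, and hence two Stein fillings $P_1^{(k)},P_2^{(k)}$, each with $b_1=b_3=0$, $c_1=0$, and the same $b_2$. (3) Compute the invariant. Compute $H_1(P_i^{(k)};\Z)$ and the intersection form $Q_{P_i^{(k)}}$, e.g.\ via the long exact sequence of $(\CP\#N\CPb,\widetilde D)$ and Poincar\'e--Lefschetz duality, expressing $Q_{P_i}$ as the annihilator/orthogonal complement of the span of $\{[S]\}\cup\{[S']\mid S'\in\widetilde D\}$. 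Show that for the two patterns these lattices are not isomorphic --- the simplest way is to compare a $\Z/2$--valued or divisibility invariant, or to note that one is even and the other is not, or to compare the discriminant group. (4) Conclude non-homotopy-equivalence: a homotopy equivalence $P_1^{(k)}\simeq P_2^{(k)}$ would restrict to the boundary (both are $Y^{(k)}$) and induce an isometry of intersection forms (using $H_1$ finite so that $H_2(P;\Z)$ carries a well-defined symmetric form once one passes to the torsion-free quotient, or the linking form on $H_1$ if one prefers a boundary invariant), contradicting Step (3).

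The main obstacle I expect is Step (1)/(3) done together: arranging the combinatorics of the two blowup sequences so that they are forced to be realized (i.e.\ both are genuinely ``compatible with the construction'' in the sense of the last paragraph of the proof of Theorem~\ref{t:ell1-hyp}, with the exceptional divisors each meeting $\widetilde D$ correctly), while simultaneously yielding the same $b_2$ and yet provably non-isometric intersection forms on the complements. One has to be careful that the ambiguity the theorem already allows (``finitely many possibilities'') does not collapse the two candidates into a single diffeomorphism type; the cleanest guarantee is an invariant like parity of $Q_P$ or the order of its discriminant group that is constant across the whole finite ambiguity set attached to a fixed blowup combinatorics but differs between the two chosen combinatorics. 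A secondary, more bookkeeping obstacle is verifying embeddability of the chosen $d^{(k)}$ from the definition via $\rho$ and blowups of $(0,0)$; this is elementary but must be written out for the specific family. Once a single pair $(P_1,P_2)$ of non-homotopy-equivalent fillings over one $Y$ is in hand, the ``infinitely many'' part is cheap, obtained by stabilizing the construction (appending blocks of $2$'s, which changes $\rho(d)$ in a controlled way and preserves embeddability) and invoking the $SL_2(\Z)$--conjugacy criterion to see the resulting bundles are pairwise non-diffeomorphic.
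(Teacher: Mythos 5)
Your proposal follows essentially the same route as the paper: the authors also realize two Stein fillings of the same $(Y,\xi_Y)$ as complements of two spherical divisors in $\CP\#9\CPb$ with identical dual intersection graph $\Ga(1,-2,-3,-3,-2,-3,-2)$ but arising from different blowup sequences of $(3\ell)$, distinguish them by the isometry type of $j_*(H_2(P_i))\cong H_2(P_i)/\langle T\rangle$ computed as the orthogonal complement of the divisor classes (both lattices turn out to be even, so it is the determinants, $-20$ versus $-180$, that do the work rather than parity), and obtain infinitely many examples by blowing up $N$ further points on a fixed sphere of each divisor, yielding determinants $(-1)^{N+1}(9N+20)$ and $(-1)^{N+1}9(9N+20)$. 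Your plan correctly anticipates all the key ingredients, including the need to quotient by the degenerate direction spanned by the torus fiber class before comparing intersection lattices.
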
 

\begin{proof}
Let us denote by $\Ga(1,1-c_1, -c_2,\ldots, -c_{\ell-1}, 1-c_\ell)$ the graph on the left of Figure~\ref{f:hyp_bundles2}. Inside a blowup 
$\CP\#\overline{\CP}$ of the standard K\"ahler $\CP$ we can easily find a spherical complex divisor $D(1,0,-1,0)$ 
having dual intersection graph $\Ga(1,0,-1,0)$ and whose complex spheres represent homology classes $h$, $h-e_1$, 
$e_1$ and $h-e_1$, where $h$ is the hyperplane class and $e_1$ is the exceptional class. Blowing up at the 
appropriate nodal point of the divisor and taking its proper trasform we get a spherical complex divisor $D(1,-1,-1,-2,0)$ 
inside $\CP\#2\overline{\CP}$ with dual intersection graph $\Ga(1, -1,-1,-2,0)$. Blowing up again we get a divisor 
$D(1,-1,-2,-1,-3,0)$ inside $\CP\#3\overline{\CP}$, whose spheres represent the classes $h$, $h-e_1-e_2$, $e_2-e_3$, 
$e_3$, $e_1-e_2-e_3$ and $h-e_1$, where the classes $e_i$ are the exceptional classes. Now we blow up in two different ways. 
First we blow up in such a way as to obtain a divisor $D_1=D(1,-2,-1,-3,-1,-3,0)\subset\CP\#4\overline{\CP}$, with spheres representing the classes: 
\[
h,\quad h-e_1-e_2-e_4,\quad e_4,\quad e_2-e_3-e_4,\quad e_3,\quad e_1-e_2-e_3\quad \text{and}\quad  h-e_1.
\]
Then, we blow up so as to obtain a divisor $D_2=D(1,-1,-3,-1,-2,-3,0)\subset\CP\#4\overline{\CP}$, with spheres representing the classes: 
\[
h,\quad h-e_1-e_2,\quad e_2-e_3-e_4,\quad e_4,\quad e_3-e_4,\quad e_1-e_2-e_3\quad \text{and}\quad  h-e_1.
\]
Finally, we suitably blow up another five times at smooth points of both $D_1$ and $D_2$, so that upon taking proper 
transforms we obtain two distinct divisors $D_1^{(0)}$ and $D_2^{(0)}$ inside $\CP\#9\overline{\CP}$ with the same 
dual intersection graph $\Ga(1,-2,-3,-3,-2,-3,-2)$. 

Let $P_i^{(0)}$, for $i=1,2$, denote the closure of a regular neighborhood $W_i^{(0)}$ of $D_i^{(0)}$. 
We claim that $P_1^{(0)}$ and $P_2^{(0)}$ are not homotopy equivalent. For the rest of this proof, whenever we mention 
homology groups we shall always implicitly use integer coefficients. Using the definition of $W_i^{(0)}$ one can check that the second homology group $H_2(W_i^{(0)})$ is free of rank 7. Let $X$ be the complex projective plane blown up nine times. Using the Mayer--Vietoris sequence for the decomposition $X = W_i^{(0)} \cup P_i^{(0)}$ one can check that $H_2(P_i^{(0)})$ is free Abelian of rank 4, and that its image $j_*(H_2(P_i^{(0)}))$ under the map induced by the inclusion $j : P_i^{(0)}\to X$ is isometric, as an intersection lattice, to $H_2(P_i^{(0)})/\langle T\rangle$, where $\langle T\rangle$ denotes the free, rank-1 subgroup generated by the class of the torus fiber in the boundary (pushed in the interior), which coincides with the kernel of the intersection pairing on $H_2(P_i^{(0)})$. This implies that the isometry class of the intersection lattice $j_*(H_2(P_i^{(0)}))$ is determined by the homotopy type of $P_i^{(0)}$. We claim that $j_*(H_2(P_1^{(0)}))$ and $j_*(H_2(P_2^{(0)}))$ are not isometric to each other, which in turn implies that $P_1^{(0)}$ and $P_2^{(0)}$ are not homotopy equivalent. To prove the claim we shall use the fact that, for $i=1,2$, $j_*(H_2(P_i^{(0)}))$ is isometric to the lattice $\Lambda_i^{(0)}$ orthogonal to the image of $H_2(W_i)$ under the map induced by the inclusion $W_i\subset X$. To see this fact one may observe that, given any $a\in H_2(X)$ orthogonal to all the homology classes of the spheres belonging to the divisor $D_i\subset W_i$, one can represent $a$ by a smooth, oriented surface disjoint from $D_i$, and therefore $a\in  j_*(H_2(P_i))$.

We now set out to compute the determinant of $\Lambda_i^{(0)}$. The classes of the spheres of $D_1^{(0)}$ are 
\[
h,\ h-e_1-e_2-e_4,\ e_4-e_5-e_6,\ e_2-e_3-e_4,\ e_3-e_7,\ e_1-e_2-e_3,\  h-e_1-e_8-e_9, 
\]
while the classes of the spheres of $D_2^{(0)}$ are 
\[
h,\ h-e_1-e_2-e_5,\ e_2-e_3-e_4,\ e_4-e_6-e_7,\ e_3-e_4,\ e_1-e_2-e_3,\  h-e_1-e_8-e_9.
\]
A direct calculation shows that the sublattice $\La_1^{(0)}$ of $H_2(\CP\#9\overline{\CP})$ orthogonal to the 
classes of $D_1^{(0)}$ has integral basis:
\[
\al_1 = e_5-e_6,\quad \al_2 = e_1 + e_3 - e_4 - e_5 + e_7 - e_8,\quad \al_3 = e_8 - e_9.
\]
On the other hand, the sublattice $\La_2^{(0)}$ orthogonal to the classes of $D_1^{(0)}$ 
has integral basis:
\[
\be_1 = e_6-e_7,\quad \be_2 = -3e_1 - 2e_2 - e_3 - e_4 + 5e_5 - e_6 + 3e_9,\quad \be_3 = e_8 - e_9.
\]
This shows that the lattices $\La_1^{(0)}$ and $\La_2^{(0)}$ are both even. The intersection matrix $(\al_i\cdot\al_j)$ has determinant $-20$, while the intersection matrix $(\be_i\cdot\be_j)$ has determinant $-180$, therefore $\La_1$ and $\La_2$ are not isometric, and $P_1^{(0)}$ and $P_2^{(0)}$ are not homotopy equivalent. Moreover, applying Theorem~\ref{t:limak} and the 
results of~\cite{BdO97} as in the proof of Theorem~\ref{t:caps} shows that $P_1^{(0)}$ and $P_2^{(0)}$ can be endowed with structures of Stein fillings of $(-\T_{-A(3,3,3,2,3,3)}, \xi_{-\T_{-A(3,3,3,2,3,3)}})$. 

This example belongs to an infinite family of examples obtained as follows.  
We blow up at $N\geq 1$ generic points of the sphere of $D_1^{(0)}$ representing $e_2-e_3-e_4$, and at $N$ 
generic points of the sphere of $D_2^{(0)}$ representing $e_4-e_6-e_7$. Taking proper transforms we get 
spherical complex divisors $D_i^{(N)}\subset\CP\#(N+9)\overline{\CP}$ having dual intersection graphs 
$\Ga(-1,-2,-3,-3-N,-2,-3,-2)$ and determining Stein fillings $P_i^{(N)}$, $i=1,2$. Arguing as for $D_i^{(0)}$, we get 
that the orthogonal lattice $\La_1^{(N)}$ has integral basis 
\[
\al_1,\quad\al_2,\quad\al_3,\quad \al_4 = e_9 - e_{10},\quad \al_5 = e_{10}  - e_{11},\quad\ldots\quad \al_{N+6} = e_{N+8}  - e_{N+9}, 
\]
while $\La_2^{(N)}$ has integral basis 
\[
\be_1,\quad\be_2,\quad\be_3,\quad \be_4 = e_9 - e_{10},\quad \be_5 = e_{10}  - e_{11},\quad\ldots\quad \be_{N+6} = e_{N+8}  - e_{N+9}.
\]
Then, an inductive computation yields
\[
\det(\al_i\cdot\al_j) = (-1)^{N+1}(9N+20)\quad\text{and}\quad \det(\be_i\cdot\be_j) = (-1)^{N+1} 9(9N+20).
\]
This shows that $P_1^{(N)}$ and $P_2^{(N)}$ are non-homotopy equivalent and carry structures of Stein fillings of the same 
contact hyperbolic bundle for each $N\geq 0$.  
\end{proof}

\section{Identifying the contact structures}\label{s:contstr}

In this section we use Honda's classification~\cite{Ho00} of tight contact structures on torus bundles over the circle 
(see also~\cite{Gi00}) to identify the contact structures $\xi_Y$ for elliptic bundles of the form 
$Y=\T_{-A(\ep)}$, with $\ep\in\{-1,0,1\}$ and $Y=\T_{A(1)}$, as well as 
for the hyperbolic bundles of Theorem~\ref{t:main}(4). We also give explicit constructions of Stein fillings for 
$(Y,\xi_Y)$ when $Y$ is elliptic as above. 

\begin{prop}\label{p:ell-class}
Let $Y$ be an elliptic torus bundle of the form $Y=\T_{-A(\ep)}$, with $\ep\in\{-1,0,1\}$ or $Y=\T_{A(1)}$. 
Then, the contact structure $\xi_Y$ is the unique tight contact structure on $Y$ 
with vanishing Giroux torsion. Moreover, $\xi_Y$ is universally tight. 
\end{prop}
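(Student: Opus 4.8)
The plan is to identify $\xi_Y$ inside Honda's list of tight contact structures on elliptic torus bundles. By Honda's classification~\cite{Ho00}, an elliptic torus bundle $Y = \T_{-A(\ep)}$ (resp.\ $\T_{A(1)}$) carries only finitely many isotopy classes of tight contact structures, all of which have vanishing Giroux torsion, and exactly one of them is universally tight; the remaining ones are virtually overtwisted. So it suffices to prove that $\xi_Y$ is universally tight, and then uniqueness of a torsion-free tight structure is equivalent to showing the virtually overtwisted ones are distinguished from $\xi_Y$ — which again is part of Honda's classification once we know $\xi_Y$ is the universally tight one. Thus the whole proposition reduces to a single assertion: \emph{$\xi_Y$ is universally tight}.

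To prove that, first I would recall from Theorem~\ref{t:caps} that $(Y,\xi_Y)$ arises as the boundary of the $\om_1$--concave neighborhood of the spherical complex divisor $D$ sitting inside a (deformation of a) blowup $X$ of $\CP$, with the complementary Stein filling $P$ being the complement of a regular neighborhood of $D$. The key point is that pulling back to a finite cover $\widetilde Y \to Y$ corresponds, on the cap side, to an unbranched cover of the concave neighborhood of $D$; since each component of $D$ is a sphere with nonnegative self-intersection meeting the rest of the configuration, such a cover is again a plumbing on spheres with a nonnegative weight, and the covering contact structure $\widetilde{\xi_Y}$ is still induced by a concave divisor neighborhood with a $+1$- or $0$-weighted sphere in it. Then I would invoke Conjecture~\ref{conj}'s mechanism in the special cases where it is known — namely, that a contact structure induced on the boundary of a concave neighborhood of a circular spherical symplectic divisor containing a sphere of square $0$ or $+1$ is tight: concretely, capping $\widetilde{\xi_Y}$ with its (necessarily Stein, hence minimal, hence rational by McDuff's theorem~\cite{McD90} applied to the $+1$- or $0$-sphere) filling yields a closed symplectic $4$--manifold containing such a sphere, so it is rational or ruled and in particular has no essential $S^2\times S^1$ summand or overtwisted ``neck''; this forces $\widetilde{\xi_Y}$ to be tight. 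Carrying this out for all finite covers gives universal tightness.

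Alternatively, and perhaps more cleanly for the elliptic case, I would use the small order of $\pi_1$-peripheral data: elliptic torus bundles are Seifert fibered (the monodromy has finite order), the universal cover of $Y$ is $\widetilde{SL_2}$ or Nil or a sphere bundle depending on $\ep$, and a torsion-free tight contact structure pulls back to one on a cover which is again one of finitely many torus bundles on the list; one then checks directly that $\widetilde{\xi_Y}$ is symplectically fillable (its filling being the pullback of the Stein filling $P$, which is Stein because the covering of a Stein domain along a finite cover of the boundary extends — using that $P$ is simply a disk-bundle-like plumbing here), hence tight by Gromov--Eliashberg. Running this over the (finite) tower of covers exhausting $\pi_1(Y)$ establishes universal tightness.

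The main obstacle I anticipate is making the ``pull back to a finite cover'' step rigorous on the symplectic-cap side: one must verify that the cover of the concave neighborhood of $D$ is still the concave neighborhood of a \emph{spherical} symplectic divisor sitting in a blowup of $\CP$ (or at least in a manifold to which McDuff's or Li--Mak's structure theorems apply), so that the $+1$- or $0$-sphere survives in the cover and the rationality/ruledness argument goes through. For elliptic bundles this is manageable because the relevant covers are explicit and few, but one should check carefully that the $+1$-framed sphere (which generates $H_2$ together with the configuration) lifts to a sphere rather than a higher-genus surface — this is where the fact that $S$ meets $D$ in the right way, together with the computation $i_*[F] = 0$ from the proof of Theorem~\ref{t:caps}, is used. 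Once that is in hand, the identification with the unique torsion-free tight structure on Honda's list is bookkeeping.
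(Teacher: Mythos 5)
Your overall strategy---pin down $\xi_Y$ inside Honda's classification using the fact that fillable contact structures have no Giroux torsion---is the right one, and it is the paper's. But your reduction is backwards, and this creates most of the difficulty you then try (unsuccessfully) to resolve. You assume Honda's list for these bundles contains a universally tight structure \emph{plus} virtually overtwisted ones, so that one must separately prove $\xi_Y$ is the universally tight member. In fact, for the four monodromies in question (conjugate to $-(T^{-1}S)^2$, $-S$, $-T^{-1}S$ and $T^{-1}S$ with $S=A(0)$, $T=\left(\begin{smallmatrix}1&1\\0&1\end{smallmatrix}\right)$), Honda's tables show there is exactly \emph{one} isotopy class of tight contact structures with vanishing Giroux torsion, that it is universally tight, and that these bundles carry \emph{no} virtually overtwisted structures at all. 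Hence the entire proposition follows from tightness and torsion-freeness of $\xi_Y$ (both consequences of fillability, the latter by Gay's theorem): $\xi_Y$ must be the unique torsion-free tight structure, and universal tightness comes for free. The paper's proof is exactly this two-step observation; no direct proof of universal tightness is needed.

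The covering-space argument you propose to prove universal tightness directly does not work as sketched. A finite cover of $Y$ need not extend over the concave cap: the cap is a neighborhood of a configuration of spheres, so its fundamental group is a proper quotient of $\pi_1(Y)$, and most covers of the boundary do not unwrap it; the same objection applies to pulling the Stein filling $P$ back along a cover of $\partial P$. Your claim that each component of $D$ has nonnegative self-intersection is also false for the relevant divisors (the right-hand graph of Figure~\ref{f:ell_bundles} has a $-1$-vertex). Finally, ``the (finite) tower of covers exhausting $\pi_1(Y)$'' is not meaningful here: $\pi_1(Y)$ is infinite, and passing from tightness of finite covers to tightness of the universal cover requires a residual-finiteness argument you do not supply. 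None of this machinery is needed once one reads off from Honda's classification that these particular elliptic bundles have no virtually overtwisted tight structures.
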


\begin{proof} 
The bundles in question are associated to the monodromies 
\begin{equation*}
-A(-1) = \left(\begin{smallmatrix} 1 & -1\\1 & 0 \end{smallmatrix}\right),\quad 
-A(0) = \left(\begin{smallmatrix} 0 & -1\\1 & 0 \end{smallmatrix}\right),\quad
-A(1) = \left(\begin{smallmatrix} -1 & -1\\1 & 0 \end{smallmatrix}\right)\quad\text{and}\quad
A(1) = \left(\begin{smallmatrix} 1 & 1\\ -1 & 0 \end{smallmatrix}\right).
\end{equation*}
Defining $S=A(0)$ and $T=\left(\begin{smallmatrix} 1 & 1\\ 0 & 1 \end{smallmatrix}\right)$, it is easy to check that 
the first monodromy is conjugate to $-(T^{-1}S)^2$, the second and third ones are equal, 
respectively, to $-S$ and $-T^{-1}S$ and the last one to $T^{-1} S$. Then, Honda's classification~\cite{Ho00} 
implies that on the associated bundles there is only one isotopy class of tight contact structures without Giroux torsion, 
and that this isotopy class is universally tight (there are no virtually overtwisted contact structures on these bundles). Since fillable contact 
structures have no Giroux torsion~\cite{Ga06}, the contact structure $\xi_Y$ must be isotopic to the unique tight contact structure on $Y$ 
without Giroux torsion. 
\end{proof}

It might be interesting to see an explicit construction of a Stein filling of $(Y,\xi_Y)$ for the bundles 
of Proposition~\ref{p:ell-class}. It follows from the proposition and the fact that fillable contact structures have 
no Giroux torsion~\cite{Ga06} that $\xi_Y$ is the unique Stein fillable contact structure on $Y$. 
Therefore, in order to exhibit a Stein filling of $(Y,\xi_Y)$ it suffices to construct a single Stein 
4--manifold with boundary $X$ such that $\del X = Y$. Starting from the obvious Kirby 
diagrams corresponding to the graphs of Figures~\ref{f:ell_bundles} and~\ref{f:par_bundles} and using Kirby calculus 
it is a simple matter to check that each of the torus bundles $\T_{-A(\ep)}$, $\ep\in\{-1,0,1\}$, is the boundary of 
the 4--dimensional plumbing given in Figure~\ref{f:steinplumb-ell}. 
\begin{figure}[ht]
\centering
$\T_{-A(-1)} :$
\hspace{1cm}
$\xygraph{
!{<0cm,0cm>;<1cm,0cm>:<0cm,1cm>::}
!{(1,0) }*+{\bullet}="b"
!{(2,0) }*+{\bullet}="c"
!{(3,0) }*+{\bullet}="d"
!{(4,0) }*+{\bullet}="e"
!{(5,0) }*+{\bullet}="f"
!{(6,0) }*+{\bullet}="g"
!{(7,0) }*+{\bullet}="h"
!{(8,0) }*+{\bullet}="i"
!{(6,1) }*+{\bullet}="l"
!{(1,-0.4) }*+{-2}
!{(2,-0.4) }*+{-2}
!{(3,-0.4) }*+{-2}
!{(4,-0.4) }*+{-2}
!{(5,-0.4) }*+{-2}
!{(6,-0.4) }*+{-2}
!{(7,-0.4) }*+{-2}
!{(8,-0.4) }*+{-2}
!{(6.4,1) }*+{-2}
"b"-"c"
"c"-"d"
"d"-"e"
"e"-"f"
"f"-"g"
"g"-"h"
"h"-"i"
"g"-"l"
}$\\
\vskip 0.5cm
$\T_{-A(0)} :$
\hspace{2cm} 
$\xygraph{
!{<0cm,0cm>;<1cm,0cm>:<0cm,1cm>::}
!{(0,0) }*+{\bullet}="a"
!{(1,0) }*+{\bullet}="b"
!{(2,0) }*+{\bullet}="c"
!{(3,0) }*+{\bullet}="d"
!{(4,0) }*+{\bullet}="e"
!{(5,0) }*+{\bullet}="f"
!{(6,0) }*+{\bullet}="g"
!{(3,1) }*+{\bullet}="h"
!{(0,-0.4) }*+{-2}
!{(1,-0.4) }*+{-2}
!{(2,-0.4) }*+{-2}
!{(3,-0.4) }*+{-2}
!{(4,-0.4) }*+{-2}
!{(5,-0.4) }*+{-2}
!{(6,-0.4) }*+{-2}
!{(3.4,1) }*+{-2}
"a"-"b"
"b"-"c"
"c"-"d"
"d"-"e"
"e"-"f"
"f"-"g"
"d"-"h"
}$\\
\vskip 0.5cm
$\T_{-A(1)} :$
\hspace{3.8cm} 
$\xygraph{
!{<0cm,0cm>;<1cm,0cm>:<0cm,1cm>::}
!{(0,0) }*+{\bullet}="a"
!{(1,0) }*+{\bullet}="b"
!{(2,0) }*+{\bullet}="c"
!{(3,0) }*+{\bullet}="d"
!{(4,0) }*+{\bullet}="e"
!{(2,1) }*+{\bullet}="f"
!{(2,2) }*+{\bullet}="g"
!{(0,-0.4) }*+{-2}
!{(1,-0.4) }*+{-2}
!{(2,-0.4) }*+{-2}
!{(3,-0.4) }*+{-2}
!{(4,-0.4) }*+{-2}
!{(2.4,1) }*+{-2}
!{(2.4,2) }*+{-2}
"a"-"b"
"b"-"c"
"c"-"d"
"d"-"e"
"c"-"f"
"f"-"g"
}$
\caption{Plumbings bounding the elliptic bundles $\T_{-A(\ep)}$, $\ep\in\{-1,0,1\}$.}
\label{f:steinplumb-ell}
\end{figure} 
In the same way it is easy to check that the bundle $\T_{A(1)}$ is the boundary of the smooth 
4--dimensional handlebody obtained by attaching a 4--dimensional 2--handle to the 4--ball along 
the right-handed trefoil knot in $S^3$ with framing $0$. Using e.g.~the results of~\cite{Go98} it is 
straightforward to check that each one of the smooth 4--manifolds just described carries a Stein 
structure with boundary and therefore gives a Stein filling of the corresponding torus bundle.

We now consider hyperbolic bundles. 
Let $Y$ by a hyperbolic torus bundle with $Y=\T_A$ and $\trace(A)< -2$. Then, as explained in Section~\ref{s:intro}, 
$Y=\T_{-A(d)}$, where $d=(d_m,d_{m-1},\ldots, d_1)$, 
$d_i\geq 2$ for all $i$ and $d_i\geq 3$ 
for some $i$. Moreover, by~\cite{Ne81}*{Theorem~6.1} $Y$ is the oriented boundary of the 4--dimensional 
plumbing $P_-(d)$ given by the diagram of Figure~\ref{f:plumb-hyp}. 
\begin{figure}[ht]
\centering
$\xygraph{
!{<0cm,0cm>;<1cm,0cm>:<0cm,1cm>::}
!{(0,1) }*+{\bullet}="c"
!{(1.6,1) }*+{\bullet}="d"
!{(3.2,1) }*+{\bullet}="e"
!{(5.6,1) }*+{\bullet}="f"
!{(7.2,1) }*+{\bullet}="g"
!{(0,1.4) }*+{-d_m}
!{(1.6,1.4) }*+{-d_{m-1}}
!{(3.2,1.4) }*+{-d_{m-2}}
!{(5.6,1.4) }*+{-d_2}
!{(7.2,1.4) }*+{-d_1}
!{(4.4,1)}*+{\cdots}
"c"-"d"^+
"d"-"e"^+
"f"-"g"^+
"g"-@/^0.5cm/"c"^{-}
}$
\caption{Plumbings $P_-(d)$ bounding the hyperbolic bundles $\T_{-A(d)}$.}
\label{f:plumb-hyp}
\end{figure} 

\begin{lemma}[\cites{Ho00, Gi00}]\label{l:vot}
$\T_{-A(d)}$ carries $(d_1-1)(d_2-1)\cdots(d_m-1)$ tight, virtually overtwisted 
contact structures up to isotopy and one universally tight contact structure with no Giroux torsion up to 
contactomorphisms.
\end{lemma}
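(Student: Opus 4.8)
The plan is to obtain Lemma~\ref{l:vot} as a direct consequence of Honda's classification of tight contact structures on torus bundles over $S^1$ \cite{Ho00} (see also \cite{Gi00}), after rewriting the monodromy $-A(d)$ in the normal form that organizes that classification.

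First I would record the factorization of $A(d)$ in terms of the standard generators $S = \left(\begin{smallmatrix} 0 & -1 \\ 1 & 0 \end{smallmatrix}\right)$ and $T = \left(\begin{smallmatrix} 1 & 1 \\ 0 & 1 \end{smallmatrix}\right)$ of $SL_2(\Z)$. A one-line computation gives $\left(\begin{smallmatrix} d & 1 \\ -1 & 0 \end{smallmatrix}\right) = T^{-d}S^{-1}$, so that
\[
A(d_1,\dots,d_m) \;=\; T^{-d_m}S^{-1}\,T^{-d_{m-1}}S^{-1}\cdots T^{-d_1}S^{-1},
\]
and hence $-A(d)$ agrees, up to an overall sign, with the word appearing in Honda's description of hyperbolic monodromies; the tuple $(d_1,\dots,d_m)$ is precisely the negative continued fraction datum carried by the cyclic plumbing $P_-(d)$ of Figure~\ref{f:plumb-hyp}. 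Since $d_i\ge 2$ for all $i$ and $d_i\ge 3$ for some $i$, the matrix $-A(d)$ is hyperbolic with negative trace, so $\T_{-A(d)}$ lies in the range covered by Honda's theorem. Reading off the enumeration there: the virtually overtwisted tight contact structures on $\T_{-A(d)}$ correspond, up to isotopy, to the admissible choices of bypass data along the $T^2\times I$ layers of a convex decomposition adapted to the factorization above, a set of cardinality $\prod_{i=1}^m (d_i-1)$ (this is the torus-bundle analogue of the $\prod(|a_i|-1)$ count for lens spaces). Honda further shows that the only tight contact structure on $\T_{-A(d)}$ without Giroux torsion which is not of this type is a single universally tight one, unique up to contactomorphism; the passage from the isotopy classification, for the virtually overtwisted structures, to the classification up to contactomorphism, for the universally tight one, is accounted for by the action of $\pi_0\Diff^+(\T_{-A(d)})$, which is governed by the centralizer of $-A(d)$ in $SL_2(\Z)$.

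The only genuine work is bookkeeping: matching Neumann's matrices $A(d)$ with Honda's normal forms, keeping track of orientations (recall $-\T_A = \T_{A^{-1}}$, while we are interested in $\T_{-A(d)}$ rather than $\T_{A(d)}$), and pinning down the exact shape of the continued-fraction expansion that indexes Honda's enumeration. I expect this translation of conventions to be the main obstacle; once it is settled, no further argument is needed. It is also worth noting that the hypothesis that $d_i\ge 3$ for some $i$ is what rules out the parabolic case $d=(2,\dots,2)$ and guarantees that every factor $d_i-1$ is a positive integer, so the product is a well-defined integer, in fact at least $2$.
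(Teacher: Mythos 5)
Your proposal follows essentially the same route as the paper: factor the monodromy $-A(d)$ as a word in $S$ and $T$ (your $S^{-1}$ is the paper's $S=A(0)$, a harmless convention shift) and then read the count of virtually overtwisted structures and the uniqueness of the zero-torsion universally tight one directly off Honda's classification table. The paper's proof is exactly this translation plus the citation, so no further comment is needed.
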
 

\begin{proof} 
One can easily check that $-A(d) = -T^{-d_1}ST^{-d_2}S\cdots T^{-d_{m-1}}ST^{-d_m}S$, where $S=A(0)$ and 
$T=\left(\begin{smallmatrix} 1 & 1\\ 0 & 1 \end{smallmatrix}\right)$. The results of~\cites{Ho00, Gi00} are 
written in terms of such a factorization of $-A(d)$, and they immediately imply the statement (see e.g.~the table on 
page 90 of~\cite{Ho00}).
\end{proof} 

\begin{lemma}\label{l:VOT_filling}
Let $Y$ by a hyperbolic torus bundle with $Y=\T_{-A(d)}$. Then, each virtually overtwisted tight contact 
structure on $Y$ admits a Stein filling $P$ with $b_1(P)=1$. 
\end{lemma}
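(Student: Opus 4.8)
The plan is to realize each virtually overtwisted tight contact structure on $Y=\T_{-A(d)}$ as the boundary of an explicit Stein handlebody modelled on the cyclic plumbing $P_-(d)$ of Figure~\ref{f:plumb-hyp}. First I would fix a handle decomposition of $P_-(d)$ consisting of one $0$--handle, one $1$--handle and $m$ $2$--handles: the $2$--handle attaching circles form a Legendrian ``necklace'' $\La=\La_1\cup\dots\cup\La_m$ in $S^1\times S^2=\partial(S^1\times B^3)$, with $\La_i$ Hopf--linked to $\La_{i\pm1}$ and the cycle closed up by draping $\La_1$ and $\La_m$ over the $1$--handle. Because the spine of $P_-(d)$ is a cycle of $2$--spheres we have $b_1(P_-(d))=1$, and by~\cite{Ne81}*{Theorem~6.1} its oriented boundary is $\T_{-A(d)}=Y$.

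To turn this into a Stein handlebody in the sense of~\cite{Go98}, I would attach the $i$--th $2$--handle along $\La_i$ with framing $\tb(\La_i)-1=-d_i$, i.e.\ realize $\La_i$ as a Legendrian unknot with Thurston--Bennequin invariant $1-d_i$. Since $d_i\ge2$ this requires $d_i-2$ stabilizations of $\La_i$, each of which may be chosen positive or negative; the rotation number of $\La_i$ is then the number of positive stabilizations minus the number of negative ones, and it ranges over the $d_i-1$ integers $-(d_i-2),-(d_i-2)+2,\dots,d_i-2$. For every one of the $\prod_i(d_i-1)$ resulting Legendrian links, Eliashberg's theorem (see~\cite{Go98}) gives a Stein domain $P$ which, by construction, is diffeomorphic to $P_-(d)$ — hence satisfies $b_1(P)=1$ and $\partial P=Y$ — and which fills some tight contact structure $\xi$ on $Y$.

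It remains to check that, as the stabilization signs vary, these $\prod_i(d_i-1)$ Stein domains fill exactly the $\prod_i(d_i-1)$ virtually overtwisted tight contact structures of Lemma~\ref{l:vot}. For this I would use the standard dictionary between Legendrian surgery on the necklace and Honda's basic slice decomposition: cutting $Y$ open along a torus fiber exhibits it as a stack of basic slices whose signs are precisely the signs of the stabilizations performed on the $\La_i$, so the constructed structures are the tight contact structures on $Y$ with prescribed basic slice sign sequences in Honda's normal form for $-A(d)=-T^{-d_1}S\cdots T^{-d_m}S$. Distinct sign sequences give non-isotopic contact structures, and a direct inspection of Honda's classification shows that every sign sequence arising from the necklace is virtually overtwisted (intuitively, the ``wrap--around'' edge of the necklace prevents the contact structure from being the unique, rotative, universally tight one). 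Since the number of such structures then matches the count in Lemma~\ref{l:vot}, they are exactly the virtually overtwisted ones, which proves the lemma. The delicate point — and the one I expect to be the main obstacle — is precisely this last bookkeeping: correctly translating each Legendrian realization into a basic slice sign sequence, and verifying that none of them yields the universally tight structure. A way to reduce it is to prove only that the constructed contact structures are tight, pairwise non-isotopic and not universally tight, and then to conclude by comparison with the count $\prod_i(d_i-1)$ of Lemma~\ref{l:vot}.
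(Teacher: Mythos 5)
Your construction is exactly the one the paper uses: the Legendrian necklace over a $1$--handle realizing $P_-(d)$, with $d_i-2$ stabilizations on the $i$--th component, producing $\prod_i(d_i-1)$ Stein domains with $b_1=1$; and your fallback strategy (show the induced structures are pairwise non-isotopic and not universally tight, then conclude by the count of Lemma~\ref{l:vot}) is also the paper's strategy. The pairwise non-isotopy is fine and is proved in the paper exactly as you would expect, by evaluating $c_1(J_r)$ on the spheres of the plumbing and invoking \cite{LM97}*{Theorem~1.2}.

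The genuine gap is the step you yourself flag as the main obstacle: showing that none of the $\xi_r$ is universally tight. Your primary route --- translating the stabilization signs into a basic slice sign sequence for Honda's normal form and ``inspecting'' the classification --- is not carried out, and there is no off-the-shelf dictionary of this kind for a \emph{cyclic} plumbing; the parenthetical intuition about the wrap-around edge is not an argument. The paper gets around this indirectly: it passes to the double cover $\widetilde{P_-(d)}\to P_-(d)$ associated to $2\Z\subset\pi_1(P_-(d))=\Z$, identifies it with the cyclic plumbing $P_+(d')$ for $d'=(d,d)$ with boundary $\T_{A(d')}$, and pulls back $J_r$ to a Stein structure $\widetilde{J_r}$ whose first Chern class evaluates as $r_j$ on the spheres $S'_1,\dots,S'_m$ and as $-r_j$ on $S'_{m+1},\dots,S'_{2m}$ (the sign flip coming from the orientation conventions needed to make consecutive intersection numbers $+1$ all the way around the cycle). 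Comparing with \cite{BO14}*{Proposition~11}, which says the universally tight structures on $\T_{A(d')}$ are filled by Stein structures with $\langle c_1(J),S'_j\rangle=\ep(d_j-2)$ for a sign $\ep$ independent of $j$, and applying \cite{LM97} again, one finds that $\widetilde{\xi}_r$ could be universally tight only if $r_j=\ep(d_j-2)=-r_j$ for all $j$, i.e.\ only if every $d_j=2$, contradicting the hyperbolicity hypothesis. Hence every $\widetilde{\xi}_r$, and therefore every $\xi_r$, is virtually overtwisted. Without this (or some substitute) argument, your proof does not close; everything before it is correct and matches the paper.
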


\begin{proof}
Figure~\ref{f:VOT_filling} represents the front diagram of a Legendrian link $\bL$ inside the standard contact $S^2\times S^1$ 
viewed as $S^3$ with a 3--dimensional 1--handle attached (see e.g.~\cite{Go98}). Recall that there is no need to specify the 
over/under information at each crossing of the diagram, because the over-strand is always the one with smaller slope. 
Label the components of the link $\bL$ as $L_1,\dots, L_m$ from left to right, with $L_m$ being the component going 
over the 1--handle. Orient the components so that $\lk(L_m,L_1) = -1$ and $\lk(L_j,L_{j+1}) = 1$ for any 
$1\le j < m$, and Legendrian stabilize $d_j-2$ times each component. For each index $j$, $L_j$ becomes a Legendrian 
unknot (which we keep denoting $L_j$) with $\tb(L_j) = -d_j+1$. Since there are $d_j-1$ ways to stabilize an 
oriented Legendrian knot $d_j-2$ times, for each $j$ we get $d_j-1$ isotopy classes of such unknots, 
distinguished by their rotation numbers. We can then attach a 4--dimensional Stein handle along each $L_j$ 
to $S^2\times S^1$ viewed as the boundary of $B^4$ union a 4--dimensional 1--handle, obtaining $\prod_j (d_j-1)$ 
Stein structures with boundary on the smooth 4--dimensional plumbing $P_-(d)$.  
Notice that $\pi_1(P_-(d)) = \Z$, and in particular $b_1(P_-(d))=1$. 
\begin{figure}[ht]
\includegraphics[width=9cm]{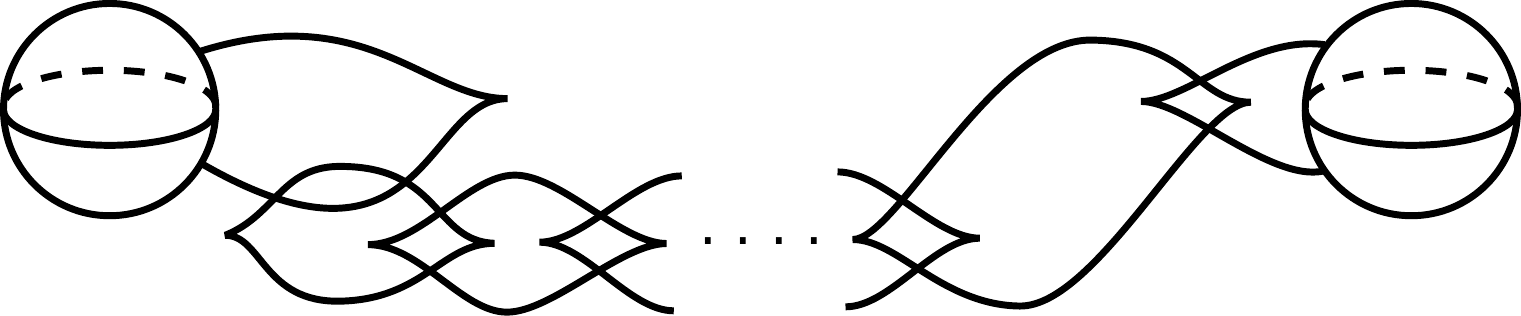}
\caption{The Legendrian link $\bL$}\label{f:VOT_filling}
\end{figure}
We now want to argue that the $\prod_j (d_j-1)$ Stein 4--manifolds we just described yield distinct isotopy classes of contact structures on 
$\del P_-(d)=\T_{-A(d)}$. Fix a choice of stabilizations, or equivalently the $m$-tuple $r = (r_1,\dots,r_m) = (\rot(L_1),\dots, \rot(L_m))$ of rotation 
numbers of the components of $\bL$, and let $J_r, \xi_r$ be the associated Stein structure on $P_-(d)$ and induced contact structure on 
$\T_{-A(d)}$, respectively. Also, let $S_j$ be the homology class carried by the oriented $j$-th sphere of the plumbing obtained by capping 
off an oriented disk bounding $L_j$ with the core of the attached 4--dimensional 2--handle. Since $\langle c_1(J_r), S_j\rangle = r_j$, 
by~\cite{LM97}*{Theorem 1.2} the contact structures $\xi_r, \xi_{r'}$ corresponding to the two $m$-tuples $r, r'$ can be isotopic 
only if $r=r'$. Thus, the collection $\{\xi_r\}_r$ consists of $(d_1-1)(d_2-1)\cdots(d_m-1)$ pairwise nonisotopic contact structures 
on $\T_{-A(d)}$, each admitting a Stein filling $P$ with $b_1(P)=1$. In view of Lemma~\ref{l:vot}, in order to finish the 
proof it suffices to show that $\xi_r$ is virtually overtwisted for every $r$. 
Consider the double cover $\widetilde{P_-(d)} \to P_-(d)$ associated to the subgroup $2\Z \subset \Z = \pi_1(P_-(d))$. 
If we denote with $d'$ the string $(d,d)$, it is easy to check that $\widetilde{P_-(d)} = P_+(d')$, where $P_+(d')$ is the plumbing  
described by the diagram of Figure~\ref{f:P+}. 
\begin{figure}[ht]
\centering
$\xygraph{
!{<0cm,0cm>;<1cm,0cm>:<0cm,1cm>::}
!{(0,1) }*+{\bullet}="c"
!{(1.6,1) }*+{\bullet}="d"
!{(4.0,1) }*+{\bullet}="x"
!{(5.6,1) }*+{\bullet}="y"
!{(8.0,1) }*+{\bullet}="f"
!{(9.6,1) }*+{\bullet}="g"
!{(0,1.4) }*+{-d_m}
!{(1.6,1.4) }*+{-d_{m-1}}
!{(4.0,1.4) }*+{-d_1}
!{(5.6,1.4) }*+{-d_m}
!{(8.0,1.4) }*+{-d_2}
!{(9.6,1.4) }*+{-d_1}
!{(2.8,1)}*+{\cdots}
!{(6.8,1)}*+{\cdots}
"c"-"d"^+
"f"-"g"^+
"x"-"y"^+
"g"-@/^0.5cm/"c"^{+}
}$
\caption{The 4--dimensional plumbing $P_+(d')$.}\label{f:P+}
\end{figure} 
Moreover, by the proof of~\cite{Ne81}*{Theorem~6.1} we have $\del{P_+(d')} = \T_{A(d')}$.

The Stein structure $J_r$ on $P_-(d)$ pulls back to a Stein structure $\widetilde{J_r}$ on $\widetilde{P_-(d)}$, 
obtained by attaching Stein handles along suitably oriented and stabilized components of the Legendrian link 
$\bL' =L'_1\cup\cdots\cup L'_{2m}$ of Figure~\ref{f:VOT_DC}. 
\begin{figure}[ht]
\centering
\includegraphics[width=12cm]{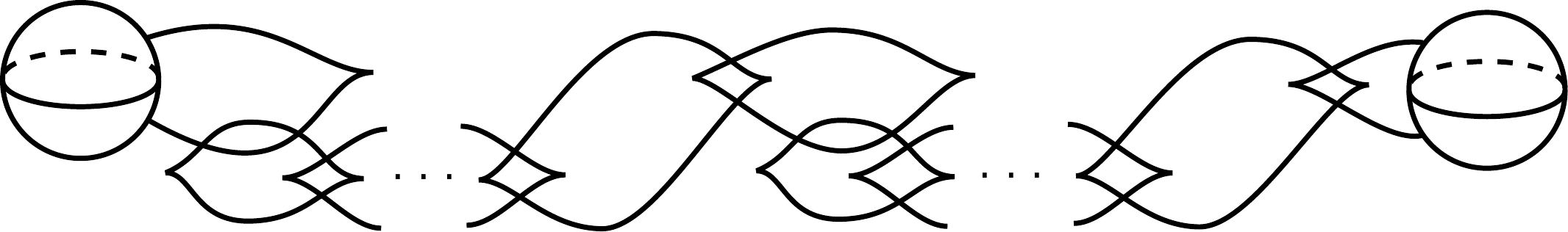}
\caption{The Legendrian link $\bL'$}\label{f:VOT_DC}
\end{figure}
With the orientations and stabilizations just described, $L'_i$ and $L'_{i+m}$ acquire the same 
rotation number $r_i$ for all $i$ (where it is understood that $L'_{2m+1} = L'_1$). 

We want to compare the Stein structures $J_r$ on $P_+(d')$ with the Stein structures constructed by Bhupal and Ozbagci 
on the same 4--manifold~\cite{BO14}. Let $S'_j\in H_2(P_+(d'))$, $j=1,\ldots, m$, be the homology class carried by the oriented spheres 
obtained by capping off an oriented disk bounding $L'_j$ with the core of the 4--dimensional 2--handle. Observe that $S'_j\cdot S'_{j+1} =+1$ 
for $j=1,\ldots, m-1$. Now we choose further homology classes $S'_j\in H_2(P_+(d'))$, $j=m+1,\ldots, 2m$, carried by spheres 
obtained by capping off disks as before, but we orient the spheres so that $S'_j\cdot S'_{j+1} = 1$ for every $i$ 
(where it is understood that $S'_{2m+1} = S'_1$). This implies that the classes $S'_{m+1},\dots, S'_{2m}$ are carried by spheres 
obtained by capping off oriented discs bounding $-L'_{m+1}, \dots, -L'_{2m}$. Therefore 
$\langle c_1(\tilde{J}_r),S'_j\rangle = r_j$ if $1\le j \le m$, while $\langle c_1(\tilde{J}_r),S_j\rangle = -r_j$ for $m< j \le 2m$.
On the other hand, according to~\cite{BO14}*{Proposition~11} for each universally tight contact structure on $\del{P_+(d')} = \T_{A(d')}$
there is a Stein structure $J$ on $P_+(d')$ such that $\langle c_1(J), S'_j \rangle=\ep (d_j-2)$ for each $j$, where $\ep = \pm 1$ is independent of $j$
(to check the orientations of the spheres see~\cite{BO14}*{Figure~6}). Applying~\cite{LM97} we conclude that, in order 
for the contact structure $\tilde{\xi}_r$ induced on $ \T_{A(d')}$ to be universally tight, we would need to have $d_j=2$ for each $j$. 
But this contradicts our assumption that $d_j\geq 3$ for at least one $j$. Therefore, each $\tilde{\xi}_r$ is virtually overtwisted, and so is $\xi_r$. This concludes the proof.
\end{proof}

We are now ready to state our result for hyperbolic bundles. 

\begin{thm}\label{t:hyp-class}
Let $Y$ by a hyperbolic torus bundle with $Y=\T_{-A(d)}$ with $d$ embeddable. Then, the contact structure $\xi_Y$ is the unique 
universally tight contact structure on $Y$ with vanishing Giroux torsion. 
\end{thm}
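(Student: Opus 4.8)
The plan is to combine Honda's classification (Lemma~\ref{l:vot}) with the two fillings results already established, reducing everything to a first-Betti-number count. First I would recall that, by Theorem~\ref{t:caps}, the contact structure $\xi_Y$ is Stein fillable; in particular it is tight, and by~\cite{Ga06} it has vanishing Giroux torsion. Hence, by Lemma~\ref{l:vot}, $\xi_Y$ is isotopic either to one of the $(d_1-1)\cdots(d_m-1)$ virtually overtwisted tight contact structures on $Y$, or to the unique universally tight contact structure on $Y$ with no Giroux torsion. So the whole content of the statement is to rule out the first possibility.

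The key step is a Betti-number obstruction. Suppose, for contradiction, that $\xi_Y$ is virtually overtwisted. Then Lemma~\ref{l:VOT_filling} provides a Stein filling $P$ of $(Y,\xi_Y)$ with $b_1(P)=1$. Blowing down all the symplectic exceptional spheres in $P$ produces a minimal Stein filling $P'$ of $(Y,\xi_Y)$; since blowing down a symplectic $(-1)$--sphere does not change the first Betti number, $b_1(P')=1$. On the other hand, $P'$ is in particular a minimal, strongly convex symplectic filling of $(Y,\xi_Y)$, so Theorem~\ref{t:ell1-hyp}, Case~\ref{ell1-hyp-item3} (which applies precisely because $d$ is embeddable), forces $b_1(P')=0$. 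This contradiction shows that $\xi_Y$ cannot be virtually overtwisted, hence it must be the unique universally tight contact structure on $Y$ with vanishing Giroux torsion, as claimed.

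The main point that has to be set up with some care is the interface between the two fillings lemmas: one must check that the Stein filling produced in Lemma~\ref{l:VOT_filling} genuinely satisfies the hypotheses of Theorem~\ref{t:ell1-hyp}, i.e.\ that (after minimalization) it is a minimal, strongly convex symplectic filling and that the comparison is made with an invariant unaffected by symplectic blowdowns — here $b_1$, which is clearly preserved. Once that is granted, the argument is essentially a pigeonhole: Honda's list leaves only finitely many tight contact structures without Giroux torsion, all but one of which are virtually overtwisted, and a first-Betti-number mismatch ("virtually overtwisted $\Rightarrow$ a Stein filling with $b_1=1$" versus "$\xi_Y\Rightarrow$ every minimal strongly convex symplectic filling has $b_1=0$") eliminates every one of those. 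No further case analysis on $d$ is needed beyond the embeddability already assumed in the statement and already used in Theorems~\ref{t:caps} and~\ref{t:ell1-hyp}.
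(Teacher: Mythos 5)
Your proof is correct and follows essentially the same route as the paper: Stein fillability from Theorem~\ref{t:caps}, vanishing Giroux torsion from~\cite{Ga06}, and then ruling out the virtually overtwisted possibilities by playing the $b_1=1$ Stein filling of Lemma~\ref{l:VOT_filling} against the $b_1=0$ conclusion of Theorem~\ref{t:ell1-hyp}, with Honda's classification supplying uniqueness of the universally tight structure. The only (harmless) difference is your intermediate blowdown step, which is in fact vacuous since a Stein filling is automatically minimal and strongly convex, so Theorem~\ref{t:ell1-hyp} applies to $P$ directly.
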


\begin{proof}
By Theorem~\ref{t:caps} the contact structure $\xi_Y$ is Stein fillable, and by~\cite{Ga06} it has no Giroux torsion. 
Suppose that $\xi_Y$ is virtually overtwisted. Then, by Lemma~\ref{l:VOT_filling}, $(Y,\xi_Y)$ admits a Stein filling $P$ with $b_1(P)=1$. But by Theorem~\ref{t:ell1-hyp} each Stein filling of $(Y,\xi_Y)$ has vanishing first Betti number. This shows that $\xi_Y$ is universally tight, and Honda's classification~\cite{Ho00} implies that on the underlying bundle there is only one isotopy class of universally tight contact structures without Giroux torsion (see e.g.~the table of~\cite{Ho00}*{Page 90}).
\end{proof}
\bibliographystyle{amsplain}
\bibliography{fillings}
\end{document}